\documentclass[11pt]{article}
\usepackage[pass]{geometry}

\usepackage[utf8]{inputenc}

\usepackage{amsmath, amssymb, amsthm}
\usepackage[abbrev,alphabetic]{amsrefs}

\BibSpec{misc}{%
    +{}  {\PrintAuthors}                {author}
    +{,} { \textit}                     {title}
    +{.} { }                            {part}
    +{:} { \textit}                     {subtitle}
    +{,} { \PrintContributions}         {contribution}
    +{.} { \PrintPartials}              {partial}
    +{,} { }                            {journal}
    +{}  { \textbf}                     {volume}
    +{}  { \PrintDatePV}                {date}
    +{,} { \issuetext}                  {number}
    +{,} { \eprintpages}                {pages}
    +{,} { }                            {status}
    +{,} { \url}                        {url}
    +{,} { \PrintDOI}                   {doi}
    +{,} { available at \eprint}        {eprint}
    +{}  { \parenthesize}               {language}
    +{}  { \PrintTranslation}           {translation}
    +{;} { \PrintReprint}               {reprint}
    +{.} { }                            {note}
    +{.} {}                             {transition}
    +{}  {\SentenceSpace \PrintReviews} {review}
}

\usepackage{fouriernc}

\usepackage{amsfonts}
\usepackage{mathtools}
\usepackage{microtype}
\usepackage{hyperref}
\hypersetup{
  colorlinks,
  citecolor=blue,
  }
\usepackage{fancyhdr}
\usepackage[capitalize]{cleveref}

\usepackage[dvipsnames]{xcolor}
\usepackage{multirow}
\usepackage[scale=0.8]{sourcecodepro}

\fancyhead[L]{\scriptsize \ourtitle}
\fancyhead[R]{\scriptsize \today}

\newtheorem{theorem}{Theorem}

\newcommand{\Aut}{\operatorname{Aut}}
\newcommand{\SAut}{\operatorname{SAut}}
\newcommand{\SL}{\operatorname{SL}}
\newcommand{\GL}{\operatorname{GL}}

\newcommand{\Z}{\mathbb Z}
\newcommand{\R}{\mathbb R}

\newcommand{\Sq}{\operatorname{Sq}}
\newcommand{\Adj}{\operatorname{Adj}}
\newcommand{\Op}{\operatorname{Op}}

\newcommand{\supp}{\operatorname{supp}}

\newcounter{dawidcomments}

\newcounter{marekcomments}

\newcounter{piotrcomments}

\numberwithin{figure}{section}
\numberwithin{table}{section}

\theoremstyle{plain}
\newtheorem{thm}{Theorem}[section]
\crefname{thm}{Theorem}{Theorems}
\newtheorem*{prop*}{Proposition}
\newtheorem*{thm*}{Theorem}
\newtheorem{prop}[thm]{Proposition}
\crefname{prop}{Proposition}{Propositions}
\newtheorem{lem}[thm]{Lemma}
\crefname{lem}{Lemma}{Lemmata}
\newtheorem{cor}[thm]{Corollary}
\crefname{cor}{Corollary}{Corollaries}

\crefname{conj}{Conjecture}{Conjectures}
\crefname{equation}{Equation}{Equations}

\theoremstyle{definition}

\newtheorem{dfn}[thm]{Definition}
\newtheorem*{dfn*}{Definition}

\theoremstyle{remark}
\newtheorem{rmk}[thm]{Remark}

\newtheoremstyle{maintheorem}{}{}{\itshape}{}{\bfseries}{}{.5em}{#1 \!\thmnote{#3}.}
\theoremstyle{maintheorem}

\makeatletter
\let\c@figure\c@thm
\let\c@table\c@thm
\makeatother
\crefname{figure}{Figure}{Figures}
\crefname{table}{Table}{Tables}


\newcommand{\SOSconedist}[1]{2^{2\left\lceil\log_2 #1 \right\rceil}}
\newcommand{\SOSconedistm}[2]{2^{2\left\lceil\log_2 #1\right\rceil #2}}

\newcommand{\ourtitle}{On property (T) for $\Aut(F_n)$ and $\SL_n(\Z)$}
\newcommand{\IMPAN}{Institute of Mathematics, Polish Academy of Sciences, Warsaw, Poland}
\newcommand{\UAM}{Adam Mickiewicz University, Poznań, Poland}
\newcommand{\TUB}{Technische Universität Berlin, Germany, Chair of Discrete Mathematics/Geometry}
\newcommand{\UB}{Universit\"{a}t Bielefeld, Bielefeld, Germany}

\title{\ourtitle}
\author{Marek Kaluba{\footnote{\UAM; \TUB}}, Dawid Kielak{\footnote{\UB}} and Piotr Nowak{\footnote{\IMPAN}}}
\date{\today}

\begin{document}
\pagestyle{fancy}
\maketitle
\abstract{
We prove that $\Aut(F_n)$ has Kazhdan's property (T) for every $n \geqslant 6$.
Together with a previous result of Kaluba, Nowak, and Ozawa, this gives the same statement for $n\geqslant 5$.


We also provide explicit lower bounds for the Kazhdan constants of $\SAut(F_n)$ (with $n \geqslant 6$) and of $\SL_n(\Z)$ (with $n \geqslant 3$) with respect to natural generating sets.
In the latter case, these bounds improve upon previously known lower bounds whenever $n > 6$.
}

\section{Introduction}

The study of $\Aut(F_n)$, the group of automorphisms of the $n$-generated free group, is motivated to a large extent by the apparent similarities between $\Aut(F_n)$ and $\SL_n(\Z)$, the special linear group over the integers.
The latter is a lattice in a semi-simple Lie group (of higher rank, if we assume that $n \geqslant 3$), which has far reaching consequences.
In particular, linear representation theory of $\SL_n(\Z)$ is very rigid -- this is reflected by Margulis's Superrigidity and by Kazhdan's property~(T).

The question of whether $\Aut(F_n)$
has Kazhdan's property~(T) is very natural in the context of the search for similarities between $\Aut(F_n)$ and $\SL_n(\Z)$.
It was raised by many authors, e.g.,
\cite{LubotzkyPak2001}, \cite{BridsonVogtmann2006}*{Question 7}, \cite{BogopolskiVikentiev2010}*{page 4}, and \cite{Breuillard2014}*{page 345}.

For $n=2$, the group $\Aut(F_2)$ maps onto $\mathrm{Out}(F_2) \cong \GL_2(\Z)$, a virtually free group, and thus does not have property~(T).
For $n=3$, the group $\Aut(F_3)$ maps onto $\mathrm{Out}(F_3)$, which is a virtually residually torsion-free nilpotent group, as shown by J.~McCool~\cite{McCool1989}, and hence does not have property~(T).
It was later shown by F.~Grunewald and A.~Lubotzky in \cite{GrunewaldLubotzky2006} that $\Aut(F_3)$ is large, that is, that it virtually maps onto $F_2$.
This fact of course also implies that $\Aut(F_3)$ does not have property~(T).
For $n=4$, the problem remains open.
The first positive result has been obtained by N.~Ozawa and the first and third author in \cite{Kalubaetal2017}, where it was shown that $\Aut(F_5)$ does have property~(T).

\smallskip
Property (T) admits many equivalent definitions;
the one which is central to our work is due to Ozawa \cite{Ozawa2016} and is expressed in terms of the Laplace element in $\R G$.
Let $G$ be a group with a finite symmetric generating set $S$.
The Laplacian $\Delta \in \R G$ is defined by
\begin{equation}
 \Delta = \vert S \vert -\sum_{s \in S} s = \frac{1}{2}\sum_{s\in S}(1-s)^*(1-s),\label{eq:theLaplacian}
\end{equation}
where $\ast \colon \R G \to \R G$ is induced by $g \mapsto g^{-1}$.
The group $G$ is said to satisfy \emph{Kazhdan's property~(T)} if there exists $\lambda > 0$ and finitely many elements $\xi_i \in \R G$ such that
\begin{equation}\label{Taka's equation}
 \Delta^2 - \lambda \Delta = \sum_i \xi_i^* \xi_i.
\end{equation}

It can be shown that property~(T) is independent of the finite generating set chosen.
For a fixed $S$ however, it is interesting to ask:
how much of the group do we need to explore to find such elements $\xi_i$ (should they exist)?
More formally, we say that the pair $(G,S)$ has \emph{Kazhdan radius} at most $R$ if
(for some $\lambda > 0$) the element $\Delta^2 - \lambda \Delta$ admits a sum of squares decomposition as above,
with the elements $\xi_i$ supported in the ball of radius $R$ around $1$ in the word-length metric on $(G, S)$.

\medskip
The main result of this paper is the following (see also \cref{main thm long}).

\begin{theorem}\label{main theorem}
$\Aut(F_n)$ has property~(T) for all $n\geqslant 6$.
\end{theorem}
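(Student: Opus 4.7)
My plan is to exploit Ozawa's sum-of-squares reformulation recalled in \eqref{Taka's equation}: it suffices to produce, for some $\lambda>0$, a decomposition $\Delta^2 - \lambda\Delta = \sum_i \xi_i^* \xi_i$ in the real group ring of $\SAut(F_n)$. Passing from $\Aut(F_n)$ to its index-$2$ subgroup $\SAut(F_n)$ incurs no loss, since property~(T) is inherited by finite-index overgroups, and the Laplacian/SOS framework transfers cleanly along the inclusion. So throughout I shall work with $\SAut(F_n)$ equipped with its standard Nielsen transvection generating set $S_n$.

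The strategy is inductive. For the base case---the natural candidate is $n=6$, since $n=5$ is already handled in \cite{Kalubaetal2017}---I would produce an explicit, certified SOS decomposition of the form $\Delta_6^2 - \lambda_6 \Delta_6 = \sum_i \xi_i^* \xi_i$ inside $\R \SAut(F_6)$, with each $\xi_i$ supported in a small ball around the identity (radius~$2$ is the minimal admissible choice, since $\Delta^2$ is supported there). Such a decomposition would be found by semidefinite programming on the Gram-matrix formulation over that ball, exploiting the natural action of the wreath product $\Z/2 \wr S_6$ on $S_6$ to block-diagonalise the SDP via representation theory. The numerical solution must then be converted into an honest rational (or integral) certificate: I would use standard perturbation and rounding techniques so that the residual error is absorbed into a slightly smaller $\lambda_6$, and verify the resulting identity by an exact computation in the group ring.

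For the inductive step $n \mapsto n+1$ with $n \geqslant 6$, I would use the many embeddings of $\SAut(F_n)$ into $\SAut(F_{n+1})$ obtained by ``forgetting'' one of the $n+1$ letters. Averaging the base SOS decomposition over the orbit of such inclusions under the $S_{n+1}$ action produces an element of $\R \SAut(F_{n+1})$ whose ``$\lambda \Delta$-part'' is a symmetric combination of the Laplacians of the embedded copies of $\SAut(F_n)$; a direct combinatorial calculation with the Nielsen generating set then shows that this combination dominates $\lambda_{n+1}\Delta_{n+1}$ for an explicit positive $\lambda_{n+1}$ which does not degenerate as $n \to \infty$. The same mechanism delivers the explicit Kazhdan constants announced in the abstract, and, via the standard surjection $\SAut(F_n) \twoheadrightarrow \SL_n(\Z)$ induced by abelianisation, yields the corresponding bounds for $\SL_n(\Z)$.

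The main obstacle is the base case. Even restricted to the radius-$2$ ball in $\SAut(F_6)$, the raw SDP is of a size well beyond what a generic solver can comfortably handle; the crux is engineering a heavy symmetry reduction together with a rigorous rounding procedure that yields a certificate in the integer group ring checkable by exact arithmetic. A secondary difficulty is tuning the inductive step so that $\lambda_n$ stays bounded away from zero uniformly in $n$; this forces the base rank to be chosen large enough that the symmetrised Laplacian becomes favourably coercive, which is precisely why $n=5$ lies outside the reach of this induction and has to be imported from \cite{Kalubaetal2017}.
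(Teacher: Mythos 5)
Your high-level framing (pass to $\SAut(F_n)$, use Ozawa's SOS criterion, seed the argument with a computer-certified decomposition, then symmetrise over inclusions $\SAut(F_n)\hookrightarrow\SAut(F_{n+1})$) is consonant with the paper, but your inductive step has a genuine gap, and your base case is precisely the computation the paper is designed to avoid.

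On the inductive step: you propose averaging the SOS certificate for $\Delta_n^2-\lambda_n\Delta_n$ over the $A_{n+1}$-orbit of embeddings and then arguing that the ``$\lambda\Delta$-part'' of the average dominates $\lambda_{n+1}\Delta_{n+1}$. But the averaged square term $\sum_{\sigma\in A_{n+1}}\sigma(\Delta_n^2)$ is \emph{not} proportional to $\Delta_{n+1}^2$. Writing $\Delta_n^2=\Sq_n+\Adj_n+\Op_n$ as a sum over pairs of edges of the $(n-1)$-simplex, the three pieces pick up three \emph{different} multiplicities under symmetrisation (because pairs of coincident, adjacent, and opposite edges sit in different $A_{n+1}$-orbits). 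Equivalently, ${h_{n+1}}^{-2}\sum_\sigma\sigma(\Delta_n^2)$ differs from $\Delta_{n+1}^2$ by a ``cross term'' $X_{n+1}$ coming from products of Laplacians of two different embedded copies of $G_n$, and this cross term contains a nonnegative multiple of $\Adj_{n+1}$, which is \emph{not} a sum of squares on general grounds (unlike $\Sq$ and $\Op$, whose summands involve commuting generators). Your ``direct combinatorial calculation with the Nielsen generating set'' cannot close this gap on its own: handling $\Adj_{n+1}$ requires a second, independent SOS certificate, not a domination estimate for $\Delta$. This is exactly what the paper supplies — a computer-certified decomposition of $\Adj_5 + k\Op_5 - \mu\Delta_5$ inside $\R\SAut(F_5)$ — which is then symmetrised (with explicit coefficients) to neutralise the $\Adj$ contribution for all larger $n$. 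Without it, the induction does not go through.

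On the base case: you take for granted that an SOS certificate for $\Delta_6^2-\lambda_6\Delta_6$ in $\R\SAut(F_6)$ on the radius-$2$ ball can be produced and rounded. The radius-$2$ ball of $\SAut(F_6)$ in the Nielsen generating set is out of reach even with symmetry reduction, and the paper never performs this computation. Instead it solves strictly smaller problems in $\SAut(F_5)$ (for $\Adj_5 + k\Op_5 - \mu\Delta_5$), and handles $n=6$ by combining that with the pre-existing certificate for $\Delta_5^2-\lambda\Delta_5$ from \cite{Kalubaetal2017} via an algebraic identity relating $\Delta_6^2$ to the $A_6$-symmetrisation of $\Delta_5^2$ plus cross terms. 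For $n\geqslant 7$ a single application of the symmetrisation mechanism to $\Adj_5+2\Op_5-\mu\Delta_5$ suffices. In short, the feasible computation lives one rank lower than where you place it, and the bridge from rank $5$ to rank $n$ is an exact algebraic/combinatorial identity about the $\Sq/\Adj/\Op$ decomposition of $\Delta_n^2$, not a coarse averaging estimate.
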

\noindent Together with \cite{Kalubaetal2017} we obtain the immediate corollary that $\Aut(F_n)$ has property~(T) for all $n \geqslant 5$.
It is worth noting that the results and methods of \cite{Kalubaetal2017} are instrumental for the results presented here.

\medskip
The proof of \cref{main theorem} is given in \cref{section (T) for AutFn}.
For technical reasons, we work with the group $\SAut(F_n)$, which is the kernel of the determinant map $\Aut(F_n) \to \GL_n(\Z) \to \Z / 2\Z$ and a subgroup of index 2 in $\Aut(F_n)$.
For $\SAut(F_n)$ we provide an explicit estimate on Kazhdan constants and we also show that the Kazhdan radius for $\SAut(F_n)$ is at most 2 for all $n\geqslant 5$.
Similarly, we give a new proof of property~(T) for $\SL_n(\Z)$ for $n \geqslant 3$, and bound the Kazhdan radius above by $2$ as well.
This aligns with the numerical results of \cite{KalubaNowak2018} where a decomposition as in \eqref{Taka's equation} of radius $2$ has been found for $n\leqslant 6$.
The new argument, however, provides necessarily weaker bounds on the Kazhdan constants than \cite{KalubaNowak2018} (for $n = 3,4,5$) -- see \cref{rmk: Explicit Kazhdan constants}.

In both cases of groups, we work with natural generating sets: for $\SL_n(\Z)$ we take $S$ to be the set of all elementary matrices ${E_{ij}}^{\pm 1}$, where $E_{ij}$ denotes the matrix which differs from the identity only in position $(i,j)$, where its entry is $1$.
For $\SAut(F_n)$, we take $S$ to consist of right and left Nielsen transvections and their inverses. More explicitly, we have
\[
 S = \{ {\rho_{ij}}^{\pm 1}, {\lambda_{ij}}^{\pm 1}  \mid 1 \leqslant i,j \leqslant n, i \neq j \},
\]
where, denoting a fixed basis of $F_n$ by $\{a_1, \dots, a_n\}$,
\[
 \rho_{ij}(a_k) = \left\{ \begin{array}{lcl}
                           a_k & \multirow{2}{*}{\text{if}} & k \neq i, \\
                           a_i a_j & & k=i,
                          \end{array} \right.
\ \ \
\lambda_{ij}(a_k) = \left\{ \begin{array}{lcl}
                           a_k & \multirow{2}{*}{\text{if}} & k \neq i, \\
                           a_j a_i & & k=i.
                          \end{array} \right.
\]

\smallskip

To prove our results we decompose the left hand side of the equation
\eqref{Taka's equation} into summands that behave well under symmetrisation by the alternating groups acting naturally on the bases elements of $F_m$ and $\Z^m$.
In particular, if such summands for a given $m$ admit sum of squares decomposition as in \eqref{Taka's equation}, so do their symmetrisations for all $n>m$.
Using this method we  reduce the problem of determining property~(T) for an infinite family of groups to finding a single sum of squares decomposition of a single element in a group algebra. 

As explained in \cite{KalubaNowak2018}*{Section~2}, finding such a decomposition can be accomplished by producing a single real positive definite matrix.
This positive definite matrix is then found numerically, using a solver designed to perform semidefinite optimization.
An additional argument following \cite{Ozawa2016} and \cite{Netzer2015}, that relies on the order structure of the self-adjoint elements in the augmentation ideal, allows for the turning of the numerical result into a proof of the existence of an exact solution, i.e. leads to a rigorous proof of property~(T).

Theorem \ref{main theorem} has several applications: it explains the fast convergence of the product replacement algorithm \cite{LubotzkyPak2001}; it implies Conjecture 4.1 for $n\ge5$ in \cite{Fisher-IMRN} as a special case; finally, it provides explicit generating sets for sequences of alternating groups that turn them into sequences of expanders~\cite{Gilman1977}.

Computational methods for property~(T) have been used successfully by T.~Netzer and A.~Thom \cite{Netzer2015}, K.~Fujiwara and Y.~Kabaya in \cite{Fujiwara2017}  M.~Kaluba and P.~Nowak in \cite{KalubaNowak2018} and the two authors and N.~Ozawa in \cite{Kalubaetal2017}.
However, they concerned specific examples only: $\SL_n(\mathbb{R})$ for $n=3,4,5,6$, various related finitely presented groups, and $\SAut(F_5)$.

\subsubsection*{Acknowledgements}

The first author was supported
by the National Science Center, Poland grant 2017/26/D/ST1/00103 \emph{Computational aspects of property~(T)} and by Deutsche Forschungsgemeinschaft (EXC 2046: {`MATH$^+$'}, Project EF1-3).

The second author was supported by the grant KI 1853/3-1 within the Priority Programme 2026 \href{https://www.spp2026.de/}{`Geometry at infinity'} of the German Science Foundation (DFG).

The third author was supported by the European Research Council (ERC) grant \emph{Rigidity of groups and higher index theory} under the European Union's Horizon 2020 research and innovation program (grant agreement no. 677120-INDEX).

The first and second author would like to express their gratitude to the Mathematical Institute of the Polish Academy of Sciences (IMPAN) in Warsaw, where some of this work was conducted.

\section{The Kazhdan radius}

Throughout the paper we will use the notation $B_R(G, S)$ to denote the ball of radius $R$ in $G$, centered at the identity, with  respect to the word-length metric induced by the generating set $S$.
We will often shorten this to just $B_R$ when both $G$ and $S$ could be inferred from context.

\paragraph{Cones of sums of squares}

Let $G$ be a group with a generating set $S$. We say that an element $x$ in $\R G$ admits a \emph{decomposition into sum of squares} if there exist finitely many elements $\xi_i \in \R G$ such that
\[
  x = \sum_i \xi_i^* \xi_i.
\]

\begin{dfn} \label{def: sos&radius}
  The elements of $\R G$ admitting a decomposition into sum of squares span a \emph{(positive) cone of squares in $\R G$} which we will denote by $\Sigma^2\R G$.

  Of special interest to us will be $IG$, the augmentation ideal of $\R G$, i.e. the kernel of the map $\R G \to \R$ sending $\Sigma_g c_g g \mapsto \Sigma_g c_g$. By $\Sigma^2 IG$ we denote the intersection $I G \cap \Sigma^2\R G$. Note that $\Sigma^2 IG$ coincides with the cone of elements which admit a decomposition into sum of squares of elements from $IG$.

The cones $\Sigma^2 \R G$ and $\Sigma^2 I G$ admit  natural \emph{filtrations by radii of supports}. For any ideal $I\subseteq \R G$ we have
    \[{\R}\cap I = \Sigma^2_0 I \subseteq \Sigma^2_1 I \subseteq \Sigma^2_2 I \subseteq \cdots \subseteq \Sigma^2 I,\]
  that is, we allow in $\Sigma^2_R I$ only those elements of $\Sigma^2 I$ which admit a decomposition into sum of squares of elements supported in $B_R$.

\end{dfn}


\begin{dfn}
Let $G$ be a group with a finite symmetric generating set $S$. We say that $G$ has \emph{Kazhdan radius} (with respect to $S$) at most $R$ if
  \[\Delta^2 - \lambda \Delta \in \Sigma^2_R\R G\]
for some $\lambda > 0$.
\end{dfn}
Note that we could replace $\Sigma^2_R\R G$ by $\Sigma^2_R I G$ in the definition above, since $\Delta^2 - \lambda \Delta \in I G$.

The significance of the Kazhdan radius lies in the fact that it tells us how much of the group is needed to prove the presence of property~(T).
Suppose that we have a map $j\colon B_{2R}(G,S) \to B_{2R}(G', S')$
and that its linear extension $j\colon \langle B_{2R}(G,S)\rangle_\R \to \langle B_{2R}(G', S')\rangle$
carries $\Delta_G$ to $\Delta_{G'}$, ${\Delta_G}^2$ to ${\Delta_{G'}}^2$, and $j\left(\Sigma^2_R \R G\right)\subset \Sigma^2_R \R G'$
(e.g. when $j$ is a local isomorphism).
If $G$ has property~(T) witnessed by sum of squares decomposition $\Delta_G^2 - \lambda \Delta_G = \sum_i\xi_i^*\xi_i \in \Sigma^2_R \R G$, then
\[{\Delta_{G'}}^2 -\lambda \Delta_{G'} = j({\Delta_G}^2 - \lambda \Delta_G) = \sum j\left(\xi_i^*\xi_i\right)\in \Sigma_R^2 \R G'\]
is a sum of squares as well.

This can be clearly envisioned in the case of modular projection $\rho_p \colon G = \SL_n(\Z) \to \SL_n(\mathbb{F}_p)=G'$.
For $p$ large enough the projection carries $\Delta_G$ to $\Delta_{G'}$ and ${\Delta}^2_{G}$ to ${\Delta_{G'}}^2$, and (since its an actual homomorphism) sum of squares to sum of squares.
On the other hand, a section of the modular projection is not a (global) homomorphism anymore,
but still can be used to lift sum of squares decomposition in $\SL_n(\mathbb{F}_p)$ to one in $\SL_n(\Z)$,
provided the support of squared elements is small enough.

Setting $n=2$ and observing that $\SL_2(\Z)$ does not have property (T), we conclude that the Kazhdan radius of $\SL_2(\mathbb{F}_p)$ with respect to $\rho_p(S)$ tends to infinity with $p$ for every fixed finite symmetric generating set $S$ of $\SL_2(\Z)$.

\section{Squaring the Laplacian}

In this section we give a new, algebraic method of proving property~(T) in $\SL_n(\Z)$ and $\SAut(F_n)$.
Most of the proofs are identical for both types of groups. Therefore, we will present them in a uniform way -- to do that, we introduce some notation.

\paragraph{The setting} We identify $\Z^n$ and $F_n$ with the free-abelian and free, respectively, groups generated by $N_n = \{a_1, \dots, a_n \}$.
Extending $N_{n+1} = N_n \cup \{a_{n+1}\}$ induces natural inclusions $\Z^n \to \Z^{n+1}$ and $F_n \to F_{n+1}$.
In the following, $G_n$ will be either $\SL_n(\Z)$ or $\SAut(F_n)$.
The groups $G_n$ come with inclusions $G_n \to G_{n+1}$ induced by the inclusions $N_n \to N_{n+1}$ in the obvious way.
We also fix a finite symmetric generating set $S_n$ of $G_n$:
\begin{itemize}
\item for $\SL_n(\Z)$, we take all elementary matrices ${E_{ij}}^{\pm 1}$;
\item for $\SAut(F_n)$, we take all Nielsen transvections and their inverses, namely ${\rho_{ij}}^{\pm 1}$ and ${\lambda_{ij}}^{\pm 1}$.
\end{itemize}
It is clear that the inclusion $G_n \to G_{n+1}$ takes $S_n$ into $S_{n+1}$.

The action of $A_n$, the alternating group of rank $n$, on $N_n$ induces an action of $A_n$ on $G_n$ by automorphisms which preserves the generating set $S_n$.
We will denote the action by $\sigma(g)$ with $\sigma \in A_n$ and $g \in G_n$.
The action of $A_{n+1}$ on $G_{n+1}$ restricts to the action of $A_n$ on the embedded copy of $G_n$,
that is, the inclusion $G_n \to G_{n+1}$ is $A_n$-equivariant.

Let $C_n$ denote the $(n-1)$-simplex with the vertex set $\{1, \dots, n \}$. The group $A_n$ acts on $C_n$ by permuting the vertices.
The set of edges (unordered pairs $\{i,j\}$ of distinct integers) $E_n$ of $C_n$ inherits the action of $A_n$, namely
\[
 \sigma(e) = \sigma\big( \{i,j\}\big) = \big\{ \sigma(i), \sigma(j) \big\}.
\]

Since generators in the set $S_n$ are double-indexed, there is an obvious function $l_n \colon S_n \to E_n$ which sends a generator to the edge between vertices labeled after its indices:
for $\SL_n(\Z)$, the function sends ${E_{ij}}^{\pm 1}$ to $\{i,j\}$;
for $\SAut(F_n)$, it sends ${\rho_{ij}}^{\pm 1}$ and ${\lambda_{ij}}^{\pm 1}$ to $\{i,j\}$.
It is immediate that $l_{n+1}\vert_{S_n} = l_n$.

Given two edges $e,f \in E_n$, we have the following trichotomy:
\begin{enumerate}
  \item the edges may coincide;
  \item they may be \emph{adjacent}, that is overlap in a single vertex;
  \item or they may be \emph{opposite}, that is disjoint.
\end{enumerate}
The function $l_n$ has the following property:
given two elements $s, t \in S_n$, if $l_n(s)$ and $l_n(t)$ are opposite edges, then $s$ and $t$ commute in $G_n$.

\smallskip
To summarise, we have the following:
\begin{itemize}
 \item a family of groups $G_n$ with finite symmetric generating sets $S_n$,
 \item the alternating group $A_n$ of rank $n$ acting on $G_n$ by automorphisms  preserving $S_n$,
 \item an $A_n$-equivariant inclusion $G_n \leqslant G_{n+1}$,
 \item an $A_n$-equivariant map $l_n \colon S_n \to E_n$ onto the set of edges of the standard $n-1$ simplex $C_n$, such that if $l_n(s)$ and $l_n(t)$ are opposite edges, then $s$ and $t$ commute in $G_n$.
\end{itemize}

Since we have inclusions $G_n \leqslant G_{n+1}$, we also have $\R G_n \subseteq \R G_{n+1}$.
The action of $A_n$ on $G_n$ induces an action on $\R G_n$ by linear extension;
it is clear that this action and every other action on $G_n$ by automorphisms preserves $\Sigma^2\R G_n$, the positive cone of sums of squares, set-wise.
More importantly: since the action of $A_n$ permutes the elements of the generating set $S_n$ it does not alter the word-length, and hence it preserves also the filtration of $\Sigma^2 \R G$ by radii.


\paragraph{The Laplacians}

\begin{dfn}
For $n \geqslant 2$, we define $\Delta_n$ to be the (non-normalised) Laplacian of $G_n$, as defined in \eqref{eq:theLaplacian}, with respect to the generating set $S_n$.

For $e = \{i,j\}  \in E_n$, we define
 $S_e = \{ t \in S_n \mid l_n(t) = e \}$ and
 \[
  \Delta_e = \vert S_e \vert - \sum_{t \in S_e} t
 \]
to be the Laplacian of the group $\left\langle S_e \right\rangle \leqslant G_n$ with respect to the generating set $S_e$.
\end{dfn}
In particular, $\Delta_{\{1,2\}}$ coincides with the Laplacian $\Delta_2$.
In fact, we have more: for every $\sigma \in A_n$ we have
\[
 \sigma\left(\Delta_e\right) = \Delta_{\sigma(e)},
\]
and so every $\Delta_e$ is equal to $\sigma(\Delta_2)$ for some $\sigma \in A_n$. This $\sigma$ is not unique -- what is unique is the coset of $\mathrm {Stab}(\{1,2\})$ in $A_n$ defined by $\sigma$.

\begin{lem}
 \label{delta using edges}
 For every $n \geqslant 3$ we have
 \[
  \Delta_n = \sum_{e \in E_n} \Delta_e = \frac{1}{(n-2)!}\sum_{\sigma\in A_n } \sigma(\Delta_2).
 \]
\end{lem}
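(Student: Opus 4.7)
The plan is to verify the two equalities in turn, both by direct manipulation of the definitions.

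For the first equality $\Delta_n = \sum_{e \in E_n} \Delta_e$, I would simply use that the map $l_n \colon S_n \to E_n$ partitions the generating set as $S_n = \bigsqcup_{e \in E_n} S_e$. Hence $|S_n| = \sum_e |S_e|$ and $\sum_{s \in S_n} s = \sum_e \sum_{t \in S_e} t$, so substituting into $\Delta_n = |S_n| - \sum_{s \in S_n} s$ immediately gives $\Delta_n = \sum_e \Delta_e$.

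For the second equality I would invoke the identity $\sigma(\Delta_e) = \Delta_{\sigma(e)}$ noted just before the lemma, together with $\Delta_{\{1,2\}} = \Delta_2$. Then $\sum_{\sigma \in A_n}\sigma(\Delta_2) = \sum_{\sigma \in A_n} \Delta_{\sigma(\{1,2\})}$, and this is a sum of Laplacians of the form $\Delta_e$ in which each $e \in E_n$ appears with multiplicity $|\{\sigma \in A_n : \sigma(\{1,2\}) = e\}|$. By the orbit-stabilizer theorem, as long as $A_n$ acts transitively on $E_n$ this multiplicity is independent of $e$ and equal to $|\mathrm{Stab}_{A_n}(\{1,2\})|$, giving $\sum_{\sigma \in A_n}\sigma(\Delta_2) = |\mathrm{Stab}_{A_n}(\{1,2\})| \cdot \sum_{e \in E_n} \Delta_e$.

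What remains is the mild bookkeeping of computing that stabilizer and checking transitivity. The stabilizer of $\{1,2\}$ in the full symmetric group $S_n$ consists of permutations preserving $\{1,2\}$ setwise, hence is $\{e,(1\,2)\} \times \mathrm{Sym}(\{3,\dots,n\})$, of order $2(n-2)!$. Since this subgroup contains the odd permutation $(1\,2)$, its intersection with $A_n$ has index $2$ in it and so order $(n-2)!$. Transitivity of $A_n$ on $E_n$ follows for $n \geqslant 4$ from the $2$-transitivity of $A_n$, and for $n = 3$ is checked by hand (since $|A_3| = |E_3| = 3$ and, for example, $(1\,2\,3)$ cycles through the three edges). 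Putting the pieces together we get $\sum_{\sigma \in A_n}\sigma(\Delta_2) = (n-2)! \sum_{e \in E_n} \Delta_e$, which combined with the first equality yields the claim. I do not expect any real obstacle here; the proof is essentially a single application of orbit-stabilizer.
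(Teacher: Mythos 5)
Your proof is correct and takes essentially the same approach as the paper: the first equality via the partition $S_n = \bigsqcup_{e\in E_n} S_e$, and the second via the orbit--stabilizer argument (the paper simply states that the $A_n$-stabiliser of an edge is isomorphic to $\mathrm{Sym}_{n-2}$, which is the same $(n-2)!$ count you carry out). You merely spell out the transitivity and stabilizer computations in more detail than the paper does.
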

\begin{proof}
 This follows directly from the definitions:
 \begin{align*}
    \Delta_n &=
        \vert S_n \vert - \sum_{t \in S_n} t \\
        &= \sum_{e \in E_n} \vert S_e \vert - \sum_{e \in E_n} \sum_{l_n(t) = e} t \\
        &= \sum_{e \in E_n} \big( \vert S_e \vert - \sum_{l_n(t) = e} t \big) \\
        &= \sum_{e \in E_n} \Delta_e.
 \end{align*}
Since the $A_n$-stabiliser of $e$ is isomorphic to the symmetric group of rank $n-2$, the second formula holds as well.
\end{proof}

\begin{cor}
\label{delta comp}
For $m \geqslant  n \geqslant 3$ we have
\[
\sum_{\sigma \in A_m} \sigma(\Delta_n) =
\frac {\vert A_n \vert }{(n-2)!} \sum_{\sigma \in A_m} \sigma(\Delta_2) =
\frac {\vert A_n \vert \cdot (m-2)!}{  (n-2)! } \Delta_m =
\binom n 2 \cdot (m-2)!  \Delta_m.
\]
\end{cor}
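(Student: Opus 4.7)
The plan is to apply \cref{delta using edges} twice, first to expand $\Delta_n$ and then to contract the resulting $A_m$-orbit sum of $\Delta_2$.

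First I would view $A_n$ as a subgroup of $A_m$ via the natural embedding that fixes $\{n+1, \dots, m\}$ pointwise; this is $A_n$-equivariant so makes sense with the compatible action on $\R G_n \subseteq \R G_m$. The second identity in \cref{delta using edges} applied at level $n$ gives
\[
  \Delta_n \;=\; \frac{1}{(n-2)!}\sum_{\tau \in A_n} \tau(\Delta_2).
\]
Substituting into the left-hand side and interchanging the two summations yields
\[
  \sum_{\sigma \in A_m} \sigma(\Delta_n) \;=\; \frac{1}{(n-2)!} \sum_{\tau \in A_n}\, \sum_{\sigma \in A_m} (\sigma\tau)(\Delta_2).
\]
For each fixed $\tau \in A_n \leqslant A_m$, the map $\sigma \mapsto \sigma\tau$ is a bijection of $A_m$, so the inner sum is $\sum_{\sigma \in A_m}\sigma(\Delta_2)$ and does not depend on $\tau$. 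Summing over the $|A_n|$ choices of $\tau$ produces the first claimed equality.

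Next I would apply \cref{delta using edges} at level $m$, which rearranges to
\[
  \sum_{\sigma \in A_m}\sigma(\Delta_2) \;=\; (m-2)!\,\Delta_m,
\]
yielding the second equality. The third equality is then the identity
\[
  \frac{|A_n|}{(n-2)!} \;=\; \frac{n!/2}{(n-2)!} \;=\; \binom{n}{2}.
\]

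There is no real obstacle here; the only points requiring some attention are the compatibility of the $A_n$- and $A_m$-actions under the inclusion $\R G_n \subseteq \R G_m$, and the bijection $\sigma \mapsto \sigma\tau$ used to absorb the inner sum. Once these are in place, the chain of equalities is purely formal.
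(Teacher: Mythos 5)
Your proof is correct and follows exactly the route the paper intends: the corollary is stated without a separate proof as an immediate consequence of Lemma~\ref{delta using edges}, and your argument — expand $\Delta_n$ via the $A_n$-orbit of $\Delta_2$, absorb the inner sum using the bijection $\sigma \mapsto \sigma\tau$ of $A_m$, then contract via Lemma~\ref{delta using edges} at level $m$ — is the standard unwinding of that dependence. Nothing to add.
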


\subsubsection*{Decomposing $\Delta^2$}

For an edge $e \in E_n$ in the standard simplex we define $\Adj(e)$ to be the set of edges in $E_n$ adjacent to $e$,
and $\Op(e)$ to be the set of edges opposite to $e$.

\begin{dfn}
For $n \geqslant 3$ we define $\Sq_n, \Adj_n, \Op_n \in \R G_n$ by
\begin{align*}
\Sq_n &=
    \sum_{e \in E_n} {\Delta_{e}}^2,\\
\Adj_n &=
    \sum_{e \in E_n} \Big( \Delta_{e} \sum_{f \in \Adj(e)} \Delta_f \Big),\\
\Op_n &=
    \sum_{e \in E_n} \big( \Delta_{e} \sum_{f \in \Op(e)} \Delta_f \big).
\end{align*}
Our convention is that empty sums are equal to $0$, and so $\Op_3 = 0$.
\end{dfn}

It is not hard to check that  $\Sq_n, \Adj_n$, and $\Op_n$ can be expressed in terms of the $A_n$-action as well.
Fix $e = \{1,2\}$ (or any other edge in $E_n$).
Then
\begin{align*}
\Sq_n  &=
    \frac{1}{(n-2)!} \sum_{\sigma \in A_{n}} \sigma({\Delta_e}^2),\\
\Adj_n &=
    \frac{1}{(n-2)!^2} \sum_{\sigma \in A_{n}} \sigma(\Delta_e ) \left( \sum_{\tau(e) \in \Adj(\sigma(e))} \tau(\Delta_e)\right),\\
\Op_n &=
    \frac{1}{(n-2)!^2} \sum_{\sigma \in A_{n}} \sigma(\Delta_e ) \left( \sum_{\tau(e) \in \Op(\sigma(e))} \tau(\Delta_e)\right).
\end{align*}
From these expressions it is clear that $\Sq_n$, $\Adj_n$ and $\Op_n$ belong to the augmentation ideal $IG_n$ of $\R G_n$ and are $*$-invariant and $A_n$-invariant.

\begin{lem}
\label{sum of sq adj op}
For $n \geqslant 3$ we have
\[
 \Sq_n + \Adj_n +\Op_n = {\Delta_n}^2 \in \Sigma^2_1 \R G.
\]
\end{lem}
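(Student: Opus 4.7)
The plan is to attack the two claims in the statement separately: first the algebraic identity $\Sq_n + \Adj_n + \Op_n = \Delta_n^2$, and then the containment $\Delta_n^2 \in \Sigma^2_1 \R G$.

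For the identity, the starting point is \cref{delta using edges}, which writes $\Delta_n = \sum_{e \in E_n} \Delta_e$. Squaring and expanding the product yields
\[
 \Delta_n^2 = \Bigl(\sum_{e \in E_n} \Delta_e\Bigr)\Bigl(\sum_{f \in E_n}\Delta_f\Bigr) = \sum_{e, f \in E_n} \Delta_e \Delta_f.
\]
Now I would partition the index set $E_n \times E_n$ according to the trichotomy stated just before the definition of $\Sq_n, \Adj_n, \Op_n$: either $e=f$, or $e$ is adjacent to $f$, or $e$ is opposite to $f$. The three resulting partial sums are, by definition, exactly $\Sq_n$, $\Adj_n$ and $\Op_n$. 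There is no main obstacle here; it is purely a matter of reindexing.

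For the sum-of-squares claim, I would use the fact that $\Delta_n$ is self-adjoint (because the generating set $S_n$ is symmetric, so $\Delta_n^* = |S_n| - \sum_{s \in S_n} s^{-1} = \Delta_n$), and that it is supported in $B_1$ (its support consists of $1$ together with the generators). Therefore
\[
 \Delta_n^2 = \Delta_n^*\,\Delta_n
\]
exhibits $\Delta_n^2$ as a single square of an element supported in $B_1$, so $\Delta_n^2 \in \Sigma^2_1 \R G$ by \cref{def: sos&radius}.

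Combining the two observations finishes the proof. The only subtlety I anticipate is bookkeeping in the trichotomy step: one must be careful that each ordered pair $(e,f) \in E_n \times E_n$ appears in exactly one of the three sums, which is automatic since the three relations (equal, adjacent, opposite) partition $E_n \times E_n$.
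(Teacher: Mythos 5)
Your proposal is correct and follows essentially the same route as the paper: the paper collects $\Sq_n + \Adj_n + \Op_n$ into $\Delta_n^2$ via the trichotomy, whereas you expand $\Delta_n^2 = \sum_{e,f} \Delta_e \Delta_f$ and partition by the trichotomy, which is the same computation read in the opposite direction. The sum-of-squares observation ($\Delta_n$ is $*$-invariant and supported in $B_1$, so $\Delta_n^2 = \Delta_n^* \Delta_n \in \Sigma^2_1 \R G$) matches the paper's as well.
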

\begin{proof}
 We have
 \begin{align*}
 \Sq_n + \Adj_n +\Op_n &= \sum_{e \in E_n} {\Delta_e}^2 +  \sum_{e \in E_n} \big( {\Delta_e}  \sum_{f \in \Adj(e)} {\Delta_f} \big) +  \sum_{e \in E_n} \big( {\Delta_e}  \sum_{f \in \Op(e)} {\Delta_f} \big) \\
 &=   \sum_{e \in E_n} \Big( {\Delta_e}  \big( \Delta_e + \sum_{f \in \Adj(e)} {\Delta_f} + \sum_{f \in \Op(e)} {\Delta_f} \big) \Big) \\
  &= \sum_{e \in E_n}  {\Delta_e} \Delta_n \\
   &= \left( \sum_{e \in E_n}  {\Delta_e} \right) \Delta_n \\
   &= {\Delta_n}^2
 \end{align*}
Since $\Delta_n$ is $*$-invariant and supported on the ball of radius $1$, the result follows.
\end{proof}

\begin{lem}
\label{sq op sos}
 We have $\Sq_n \in \Sigma_1^2 \R G_n$ and $\Op_n \in \Sigma_2^2 \R G_n$.
\end{lem}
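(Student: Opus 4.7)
For $\Sq_n$, the key observation is that each $\Delta_e$ is self-adjoint and already supported in $B_1$: since $S_n$ (and hence $S_e$) is symmetric, the identity in \eqref{eq:theLaplacian} gives $\Delta_e = \Delta_e^*$, and the support $\{1\}\cup S_e$ lies in $B_1$. Thus $\Delta_e^2 = \Delta_e^*\Delta_e$ is itself a single square of an element supported in $B_1$, and $\Sq_n = \sum_{e\in E_n}\Delta_e^*\Delta_e \in \Sigma_1^2\R G_n$ is immediate.

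For $\Op_n$, I would exploit the distinguishing property of opposite edges that was baked into the setup: if $l_n(s)$ and $l_n(t)$ are opposite, then $s$ and $t$ commute in $G_n$. Fixing opposite $e,f\in E_n$ and using the right-hand expression of \eqref{eq:theLaplacian}, I expand
\[
\Delta_e\Delta_f = \frac{1}{4}\sum_{s\in S_e}\sum_{t\in S_f}(1-s)^*(1-s)(1-t)^*(1-t).
\]
Since every $s\in S_e$ commutes with every $t\in S_f$, the four factors $(1-s)$, $(1-s)^*$, $(1-t)$, $(1-t)^*$ pairwise commute, so each summand reorganises as $\bigl[(1-s)(1-t)\bigr]^*\bigl[(1-s)(1-t)\bigr]$. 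The element $(1-s)(1-t)$ is supported on $\{1,s,t,st\}\subseteq B_2$, so $\Delta_e\Delta_f\in\Sigma_2^2\R G_n$. Summing over all opposite pairs $(e,f)$ then yields $\Op_n\in\Sigma_2^2\R G_n$.

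No step here looks genuinely hard: the $\Sq_n$ part is essentially an unwinding of definitions, and the $\Op_n$ part reduces entirely to the commutation property that was engineered into the framework precisely so that opposite edges could be handled. The only small care needed is to confirm that $\Sigma_R^2\R G_n$ is closed under nonnegative scaling and finite sums, which is clear from \cref{def: sos&radius}; the factor $\tfrac{1}{4}$ and the double sums are then absorbed without trouble. The more delicate case, of course, is $\Adj_n$, which is not addressed by the present lemma and where the commutation trick fails — that is where the substantive work (and, presumably, the semidefinite computation) will have to enter.
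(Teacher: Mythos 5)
Your proof is correct and follows essentially the same route as the paper's: $\Sq_n$ is handled by noting each $\Delta_e$ is self-adjoint and supported in $B_1$, and $\Op_n$ by expanding $\Delta_e\Delta_f$ via \eqref{eq:theLaplacian} and using the commutation of $s\in S_e$ with $t\in S_f$ to reorganise each term as a single square $\bigl((1-t)(1-s)\bigr)^*\bigl((1-t)(1-s)\bigr)$ supported in $B_2$.
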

\begin{proof}
 The element $\Sq_n$ is a sum of squares of Laplacians, which are supported in $B_1$.

For $n\leqslant3$ we have $\Op_n = 0$ by definition. For $n>3$, the element $\Op_n$ is a sum of elements of the form $\Delta_e \Delta_f$ with $f \in \Op(e)$, hence for every $t \in S_e$ and $s \in S_f$, the elements $t$ and $s$ commute in $G_n$.
Now
\begin{align*}
 \Delta_e \Delta_f &= \big( \vert S_e\vert - \sum_{t \in S_e} t \big) \big( \vert S_f\vert - \sum_{s \in S_f} s \big) \\
 &= \frac 1 4 \big( \sum_{t \in S_e} (1-t^{-1})(1-t) \big) \big( \sum_{s \in S_f} (1-s^{-1})(1-s) \big) \\
 &= \frac 1 4 \sum_{(t,s) \in S_e \times S_f} \big( (1-t)(1-s) \big)^* \big((1-t)(1-s)\big)
\end{align*}
which proves the claim.
\end{proof}

\begin{lem}
\label{Adj comp}
For $m \geqslant n \geqslant 3$ we have
\[
\sum_{\sigma \in A_m} \sigma(\Adj_n) = n(n-1)(n-2) \frac {(m-3)!}{2} \Adj_m.
\]
\end{lem}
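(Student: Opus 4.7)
I would prove the identity by double-counting: expand $\sum_{\sigma \in A_m}\sigma(\Adj_n)$ as a linear combination of products $\Delta_{e'}\Delta_{f'}$ indexed by ordered adjacent pairs $(e',f')$ in $E_m^2$, and show that every such coefficient equals the universal constant $n(n-1)(n-2)(m-3)!/2$. Since $\Adj_m$ has coefficient $1$ on each $\Delta_{e'}\Delta_{f'}$ with $(e',f')$ adjacent, this will give exactly the claimed formula.

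After expanding, the coefficient of $\Delta_{e'}\Delta_{f'}$ is
\[
N(e',f') \;=\; \bigl|\{(\sigma,(e,f)) \in A_m \times E_n^2 : (e,f)\text{ adjacent},\ \sigma(e,f)=(e',f')\}\bigr|.
\]
The key reduction is that for fixed $(e',f')$ the preimage $(e,f)=(\sigma^{-1}(e'),\sigma^{-1}(f'))$ is automatically an ordered adjacent pair (bijections preserve incidence), and lies in $E_n^2$ iff the three-element vertex set $V \subset \{1,\ldots,m\}$ of $(e',f')$ satisfies $\sigma^{-1}(V) \subset \{1,\ldots,n\}$. Re-indexing by $\tau=\sigma^{-1}$ gives $N(e',f') = |\{\tau \in A_m : \tau(V) \subset \{1,\ldots,n\}\}|$.

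I would then count this set in two stages. First, in $S_m$: a $\tau$ with $\tau(V) \subset \{1,\ldots,n\}$ is obtained by choosing an injection $V \hookrightarrow \{1,\ldots,n\}$ in $n(n-1)(n-2)$ ways and extending by any bijection on the complements in $(m-3)!$ ways, for a total of $n(n-1)(n-2)(m-3)!$ permutations. Second, intersecting with $A_m$: this halves the count. The main (and essentially only) obstacle is verifying the parity split uniformly; this is handled by the involution $\tau \mapsto \tau \cdot (ij)$ where $(ij)$ is a transposition with $i,j \in V$. Since $(ij)$ permutes $V$ setwise, the condition $\tau(V) \subset \{1,\ldots,n\}$ is preserved, while the parity is reversed. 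Placing the transposition \emph{inside} $V$ (rather than in its complement) is the right move because $|V|=3 \geq 2$ always, so the argument works uniformly for every $m \geq n \geq 3$ without splitting off the small values $m\in\{3,4\}$ where the complement of $V$ might have fewer than two elements.

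Combining, $N(e',f') = n(n-1)(n-2)(m-3)!/2$ for every adjacent $(e',f') \in E_m^2$, which gives the stated identity upon summing the $\Delta_{e'}\Delta_{f'}$.
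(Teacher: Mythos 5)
Your proof is correct, and it is a genuine (if modest) variation on the paper's argument. The paper also counts the coefficient of each $\Delta_{e'}\Delta_{f'}$ but does so via orbit--stabiliser: it sums over the $n(n-1)(n-2)$ adjacent pairs in $E_n$, uses that $A_m$ acts transitively on adjacent pairs of $E_m$ for $m>4$, and multiplies by the stabiliser order $(m-3)!/2$. This forces the paper to treat $m=n$ and $m=4$ as special cases, because for $m=4$ the $A_4$-action on adjacent pairs of $E_4$ is \emph{not} transitive (two orbits) and the na\"{\i}ve stabiliser formula $|A_{m-3}|=(m-3)!/2$ fails to be an integer. Your dual count --- fix $(e',f')$, reduce to counting $\tau\in A_m$ with $\tau(V)\subset\{1,\dots,n\}$, count in $S_m$ and then halve via the sign-reversing involution $\tau\mapsto\tau\cdot(ij)$ with $i,j\in V$ --- sidesteps both transitivity of the $A_m$-action on adjacent pairs and any stabiliser computation, and the observation that the transposition should live inside $V$ (always of size $3\geqslant 2$) is exactly what makes the parity argument uniform for all $m\geqslant n\geqslant 3$. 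The net effect is a cleaner proof with no case split; the paper's route is arguably more in the spirit of the surrounding lemmata (which all phrase things via orbits and stabilisers) but is terser and less self-contained on the $m=4$ case.
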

\begin{proof}
This is a direct computation.
If $m=n$ then the result follows immediately, since then $n(n-1)(n-2) \frac {(m-3)!} 2 = \vert A_n \vert$ and $\Adj_n$ is $A_n$\nobreakdash-\hspace{0pt}invariant.

Suppose that $m \neq n$.
The case of $m=4$ (and hence $n=3$) is special, since $A_4$ does not act transitively on pairs $(e,f)$ with $e \in E_3$ and $f \in \Adj(e)$;
instead, the action has two orbits, each of cardinality $3$.
Also, the stabiliser in $A_4$ of a pair $(e,f)$ is the same as the stabiliser of $3$ points in the natural action of $A_4$ on $4$ points, i.e. is trivial.
It is clear that the coefficient next to $\Adj_m$ on the right hand side is equal to the product of the cardinality of this stabiliser and any of the orbits.
We thus have
\[
\sum_{\sigma \in A_4} \sigma(\Adj_3) = 3 \Adj_4
\]
which agrees with the statement.

Now suppose that $m > 4$.
The action of $A_m$ on the pairs $(e,f)$ is transitive,
there are $ n(n-1)(n-2)$ such pairs,
and the stabiliser has cardinality $\frac {(m-3)!} 2$.
Therefore
\[
\sum_{\sigma \in A_m} \sigma(\Adj_n) = n(n-1)(n-2) \frac {(m-3)!}{2} \Adj_m.\qedhere
\]
\end{proof}

\begin{lem}
\label{Op comp}
For $m \geqslant n \geqslant 4$ we have
\[
\sum_{\sigma \in A_m} \sigma(\Op_n) = 2\binom{n}{2} \binom{n-2}{2} {(m-4)!} \Op_m.
\]
\end{lem}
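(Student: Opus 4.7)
My plan is to follow the structure of \cref{Adj comp}. The first step is to expand
\[
\sum_{\sigma \in A_m}\sigma(\Op_n) = \sum_{(e,f)} \sum_{\sigma \in A_m} \Delta_{\sigma(e)}\Delta_{\sigma(f)},
\]
where $(e,f)$ runs over the $\binom{n}{2}\binom{n-2}{2}$ ordered pairs of opposite edges in $E_n$. Because any permutation preserves disjointness of edges, $(\sigma(e),\sigma(f))$ is always an ordered opposite pair in $E_m$, so the inner sum can only contribute to $\Op_m$.

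The second step is the combinatorial heart of the argument: I will verify that for $m\geqslant 4$ the action of $A_m$ on ordered opposite pairs in $E_m$ is transitive with stabiliser of order $2(m-4)!$. The $S_m$-stabiliser of $(\{1,2\},\{3,4\})$ is the product $S_{\{1,2\}}\times S_{\{3,4\}}\times S_{\{5,\ldots,m\}}$, of order $4(m-4)!$. Since the transposition $(1\,2)$ lies in this subgroup and is odd, its intersection with $A_m$ has index two, hence order $2(m-4)!$. The corresponding orbit therefore has size $|A_m|/2(m-4)! = m(m-1)(m-2)(m-3)/4 = \binom{m}{2}\binom{m-2}{2}$, which matches the total number of ordered opposite pairs in $E_m$; this confirms transitivity.

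Assembling the pieces, for any fixed ordered opposite pair $(e,f)\in E_n\times E_n \subseteq E_m\times E_m$,
\[
\sum_{\sigma \in A_m}\Delta_{\sigma(e)}\Delta_{\sigma(f)} \;=\; 2(m-4)!\cdot \Op_m,
\]
and multiplying by the $\binom{n}{2}\binom{n-2}{2}$ choices of $(e,f)$ in $E_n$ yields the claimed identity.

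I do not anticipate any serious obstacle. In contrast to \cref{Adj comp}, where $A_4$ acts on pairs of adjacent edges of $E_3$ with two orbits and thus forces a special case, for opposite pairs transitivity holds uniformly once $n\geqslant 4$, so no case split is needed. As a sanity check, at $m=n=4$ the right-hand side is $2\cdot 6 \cdot 1\cdot 0!\cdot \Op_4 = 12\,\Op_4 = |A_4|\,\Op_4$, consistent with the $A_n$-invariance of $\Op_n$.
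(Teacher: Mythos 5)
Your argument is correct and follows the same route as the paper: count the ordered opposite pairs in $E_n$, observe $A_m$ acts transitively on such pairs in $E_m$, and compute the $A_m$-stabiliser to have order $2(m-4)!$. The only difference is that you spell out the stabiliser computation (via the odd transposition and the index-two argument) and the orbit-size sanity check, which the paper leaves implicit.
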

\begin{proof}
This is  a simple computation as before: there are $\binom{n}{2}\binom {n-2}{2}$ pairs $(e,f)$ with $e \in E_n$ and $f \in \Op(e)$,
the action of $A_m$ is transitive on such pairs,
and the stabiliser of a single such pair has cardinality $4 \vert A_{m-4} \vert = 2 \cdot {(m-4)!}$.
\end{proof}

\section{Proving property (T)}

We are now ready to give two methods of proving property~(T) for the groups $G_n$.

\subsection{The Methods}

\subsubsection*{Method I: using $\Adj_n + k\Op_n$}
This method will be applied to $\SL_n(\Z)$ for $n \geqslant 3$ and $\SAut(F_n)$ for $n \geqslant 7$.

\begin{prop}[Method I]
\label{Adj+kOp>=0}
\label{method I}
Let $n\geqslant 3$ and suppose that
\[\Adj_n +k \Op_n - \lambda \Delta_n \in \Sigma^2_R \R G_n\]
for some $R \geqslant 2,k \geqslant 0$, and $\lambda > 0$.
Then $G_m$ has Kazhdan's property~(T) for every $m\geqslant n$ such that
$k(n-3) \leqslant m-3$.
Moreover, the Kazhdan radius of $(G_m,S_m)$ is bounded above by $R$.
\end{prop}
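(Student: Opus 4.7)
The plan is to reduce the desired sum-of-squares decomposition for $G_m$ to the hypothesis for $G_n$ by averaging over the $A_m$-action. Concretely, starting from
\[
\Adj_n + k\,\Op_n - \lambda\,\Delta_n \in \Sigma^2_R \R G_n,
\]
I would apply the operator $\sum_{\sigma \in A_m} \sigma(\cdot)$. Because the inclusion $G_n \hookrightarrow G_m$ sends $S_n$ into $S_m$, and each $\sigma \in A_m$ permutes $S_m$, the action of $A_m$ is an isometry of $(G_m, S_m)$ and preserves the filtration by radii of the cone of squares. Hence the symmetrised element still lies in $\Sigma^2_R \R G_m$.

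Next, I would rewrite the symmetrised inequality using \cref{delta comp}, \cref{Adj comp} and \cref{Op comp}, which express each of $\sum_\sigma \sigma(\Delta_n)$, $\sum_\sigma \sigma(\Adj_n)$, $\sum_\sigma \sigma(\Op_n)$ as explicit positive multiples of $\Delta_m$, $\Adj_m$, $\Op_m$ respectively. (In the boundary case $n=3$ the $\Op_n$ term disappears by definition, so \cref{Op comp} is not needed.) After dividing by the positive coefficient of $\Adj_m$, one obtains a relation
\[
\Adj_m + \beta'\,\Op_m - \gamma'\,\Delta_m \in \Sigma^2_R \R G_m,
\]
with $\gamma' > 0$ and a short arithmetic check showing $\beta' = k(n-3)/(m-3)$. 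The numerical hypothesis $k(n-3) \leqslant m-3$ is then exactly the condition $\beta' \leqslant 1$.

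Finally, I would invoke \cref{sum of sq adj op} to split ${\Delta_m}^2 = \Sq_m + \Adj_m + \Op_m$ and rearrange:
\[
{\Delta_m}^2 - \gamma'\,\Delta_m = \Sq_m + \bigl(\Adj_m + \beta'\,\Op_m - \gamma'\,\Delta_m\bigr) + (1 - \beta')\,\Op_m.
\]
By \cref{sq op sos}, the summands $\Sq_m$ and $\Op_m$ lie in $\Sigma^2_2 \R G_m \subseteq \Sigma^2_R \R G_m$ (using $R \geqslant 2$); the middle bracket is in $\Sigma^2_R \R G_m$ by the previous step; and $1 - \beta' \geqslant 0$ by the hypothesis. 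Summing these contributions yields $\Delta_m^2 - \gamma' \Delta_m \in \Sigma^2_R \R G_m$, which is property~(T) for $G_m$ with Kazhdan radius at most $R$.

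The only delicate point is the bookkeeping: verifying that the ratio of the $\Op_m$-coefficient to the $\Adj_m$-coefficient produced by symmetrisation simplifies to exactly $k(n-3)/(m-3)$, so that the stated numerical hypothesis matches precisely the condition $1 - \beta' \geqslant 0$ needed for the final decomposition. Everything else is formal manipulation using the lemmas already in hand.
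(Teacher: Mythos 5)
Your proof is correct and follows essentially the same approach as the paper: average over $A_m$ using \cref{delta comp}, \cref{Adj comp}, and \cref{Op comp}, normalise to isolate $\Adj_m + \frac{k(n-3)}{m-3}\Op_m - \frac{\lambda(m-2)}{n-2}\Delta_m$, then combine with \cref{sum of sq adj op} and \cref{sq op sos} to close out the decomposition. The paper merely carries the overall constant rather than dividing it out early, and treats $n=3$ as a separate display since $\Op_3=0$; these are cosmetic differences.
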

\begin{proof}
Suppose first that $n \geqslant 4$.
By \cref{delta comp,Adj comp,Op comp} we have
\begin{multline*}
\sum_{\sigma \in A_m} \sigma\left(\Adj_n + k \Op_n - \lambda \Delta_n\right)=
n(n-1)(n-2)\frac{(m-3)!}{2} \Adj_m + \\
+ 2k \binom{n}{2}\binom{n-2}{2}(m-4)!\Op_m -
\lambda \binom{n}{2}(m-2)!\Delta_m\\
= \frac{n(n-1)(n-2)}{2}(m-3)!\left(\Adj_m + \frac{k(n-3)}{m-3}\Op_m - \frac{\lambda(m-2)}{n-2}\Delta_m\right).
\end{multline*}
Moreover, since $A_m$ acts on $G_m$ by automorphisms, the action on $\Sigma^2\R G$ preserves filtration by radii of support, hence the element belongs to $\Sigma^2_R \R G_m$. Using the equality above and \cref{sum of sq adj op} we obtain
\begin{align*}
 {\Delta_m}^2 - \frac{\lambda(m-2)}{n-2} \Delta_m
 & = \Sq_m + \Adj_m + \Op_m - \frac{\lambda(m-2)}{n-2}  \Delta_m\\
 & = \Sq_m + \left(1-\frac{k(n-3)}{m-3}\right)\Op_m +\\
 & \quad + \frac{2}{n(n-1)(n-2)(m-3)!}\sum_{\sigma \in A_m}\sigma \left(\Adj_n + k \Op_n - \lambda \Delta_n \right).
\end{align*}
By \cref{sq op sos}, both the elements $\Sq_m$ and $\Op_m$ belong to $\Sigma^2_2 \R G_m$.
Therefore whenever $1-\frac{k(n-3)}{m-3} \geqslant 0$ we have
\[{\Delta_m}^2 - \frac{\lambda(m-2)}{n-2} \Delta_m \in \Sigma^2 _R \R G_m,\] which is equivalent to property~(T) for $G_m$ by \cite{Ozawa2016}.

The case $n = 3$ is special, since $\Op_3 = 0$ and then
\[\Delta_m^2 - (m-2)\lambda \Delta = \Sq_m + \Op_m + \frac{1}{3\cdot (m-3)!}\sum_{\sigma\in A_m} \sigma\left(\Adj_3 -\lambda\Delta_3\right) \in \Sigma^2_R \R G_m.\]
\end{proof}

The following corollary is a direct consequence of e.g. \cite{Bekkaetal2008}*{Remark 5.4.7}.

\begin{cor}
Under the same assumptions as above,
the spectral gap of $\Delta_m$ is bounded below by $\frac{\lambda(m-2)}{n-2}$, which leads to a lower bound of
\[\sqrt{\frac{2\lambda(m-2)}{(n-2)|S_m|}} \leqslant \kappa(G_m, S_m)\]
on the Kazhdan constant.
\end{cor}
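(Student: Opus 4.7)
The plan is to combine the conclusion of the proposition (as rederived in its proof) with the standard dictionary between sum-of-squares decompositions of $\Delta^2 - \lambda'\Delta$ and the spectral gap of $\Delta$, and then invoke the classical Kazhdan-constant estimate from the spectral gap.

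First I would extract the essential output of the proof of \cref{method I}: under the hypotheses, we have the containment
\[
\Delta_m^2 - \frac{\lambda(m-2)}{n-2}\, \Delta_m \;\in\; \Sigma^2 \R G_m.
\]
This is exactly what allows us to read off a spectral gap. Indeed, in every unitary representation $\pi$ of $G_m$ without nonzero invariant vectors, $\pi(\Delta_m)$ is a positive self-adjoint operator, and the relation $\pi(\Delta_m)^2 - \frac{\lambda(m-2)}{n-2} \pi(\Delta_m) = \sum_i \pi(\xi_i)^*\pi(\xi_i) \succeq 0$ forces the spectrum of $\pi(\Delta_m)$ to be contained in $\{0\} \cup [\frac{\lambda(m-2)}{n-2}, \infty)$. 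Since $\pi$ has no invariant vectors, $0$ is not an eigenvalue, so in fact $\pi(\Delta_m) \succeq \frac{\lambda(m-2)}{n-2}\cdot \mathrm{Id}$. This gives the claimed lower bound on the spectral gap.

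Next I would convert the spectral gap into a Kazhdan-constant bound using the standard estimate (this is precisely \cite{Bekkaetal2008}*{Remark 5.4.7}): if the non-normalised Laplacian associated with a finite symmetric generating set $S$ has spectral gap at least $\lambda'$ on representations without invariant vectors, then for every unit vector $v$ and every such representation $\pi$,
\[
\max_{s\in S} \|\pi(s)v - v\|^2 \;\geqslant\; \frac{1}{|S|}\sum_{s\in S}\|\pi(s)v-v\|^2 \;=\; \frac{2\langle \pi(\Delta_m)v,v\rangle}{|S|} \;\geqslant\; \frac{2\lambda'}{|S|}.
\]
Taking square roots and setting $\lambda' = \frac{\lambda(m-2)}{n-2}$ yields the stated lower bound on $\kappa(G_m, S_m)$.

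There is no genuine obstacle here; the only thing worth being careful about is the passage from \emph{``$\Delta^2-\lambda'\Delta$ is a sum of squares''} to \emph{``the spectral gap of $\Delta$ is at least $\lambda'$''}, which uses that $0$ is excluded from the spectrum whenever no invariant vector exists. Everything else is a direct citation.
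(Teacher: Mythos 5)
Your proof is correct, and it is essentially the approach the paper has in mind: the paper simply declares the corollary to be ``a direct consequence of \cite{Bekkaetal2008}*{Remark 5.4.7}'' without spelling out the argument, and your two steps (reading the spectral gap off the sum-of-squares certificate, then the averaging bound $\max_{s}\|\pi(s)v-v\|^{2}\geqslant \tfrac{2\langle\pi(\Delta)v,v\rangle}{|S|}$) are precisely the content of that citation. One small point worth tightening: you infer $\pi(\Delta_m)\succeq \lambda'\,\mathrm{Id}$ from ``$0$ is not an eigenvalue,'' but a priori $0$ could lie in the spectrum without being an eigenvalue; the reason this cannot happen here is that the sum-of-squares certificate forces the spectrum into $\{0\}\cup[\lambda',\infty)$, so $0$ would be an \emph{isolated} spectral point, and an isolated point of the spectrum of a self-adjoint operator is always an eigenvalue. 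With that one sentence added, the argument is complete.
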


\subsubsection*{Method II: using $\Adj_5 + k \Op_5$ and induction}

This method will be applied to $\SAut(F_6)$.

Before proceeding, let us define
\[h_n = \frac{(n-2) \cdot (n-1)!}{2}\] which is the number of elements $\sigma \in A_n$ such that $\{1,2\} \subset \sigma(\{1,\ldots, n-1\})$. (The notation $h$ stands for `hypersimplex', since $h_n$ gives the number of elements $\sigma$ taking a fixed hypersimplex to one containing a given edge.)

\begin{lem}\label{lem: induction}
For every $n \geqslant 3$ we have
\[
 {\Delta_{n}}^2 - \lambda \Delta_{n} =
  \frac 1 {{h_{n}}^2} \left(
    \sum_{\sigma \in A_{n}} \sigma\left(
      {\Delta_{n-1}}^2 -\lambda h_{n} \Delta_{n-1}
    \right) + X_{n}
  \right),
\]
where
\[
X_n = \sum_{\sigma \in A_n} \left(
       \sigma\left( \Delta_{n-1}\right)
       \sum_{\tau \in A_n, \tau \neq \sigma} \tau\left(\Delta_{n-1}\right)
   \right).
\]
\end{lem}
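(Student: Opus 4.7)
The plan is to reduce everything to the symmetrisation identity
\[
\sum_{\sigma \in A_n} \sigma(\Delta_{n-1}) = h_n \Delta_n,
\]
and then simply expand a square. First I would establish this identity. For $n \geqslant 4$, it is an immediate application of \cref{delta comp} with $m = n$ and with $n$ replaced by $n-1$: the coefficient $\binom{n-1}{2}(n-2)!$ produced by that corollary simplifies to $\tfrac{(n-2)(n-1)!}{2} = h_n$. For the edge case $n = 3$, the identity reduces to $\sum_{\sigma \in A_3}\sigma(\Delta_2) = \Delta_3$, which is \cref{delta using edges} with $n = 3$ (and $h_3 = 1$).

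Next I would square both sides. The left hand side becomes $h_n^2 \Delta_n^2$. On the right, since each $\sigma \in A_n$ acts as an automorphism on $G_n$ and its linear extension to $\R G_n$ is a $*$-algebra homomorphism, we have $\sigma(\Delta_{n-1})^2 = \sigma(\Delta_{n-1}^2)$. Hence the expansion of the squared sum splits cleanly as
\[
h_n^2 \Delta_n^2 = \Bigl(\sum_{\sigma \in A_n}\sigma(\Delta_{n-1})\Bigr)^2 = \sum_{\sigma \in A_n}\sigma(\Delta_{n-1}^2) + X_n,
\]
with $X_n$ matching its definition in the statement as the collection of off-diagonal terms.

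For the linear part, I would use the same identity once more to write
\[
h_n^2 \lambda \Delta_n = \lambda h_n \sum_{\sigma \in A_n}\sigma(\Delta_{n-1}) = \sum_{\sigma \in A_n}\sigma\bigl(\lambda h_n \Delta_{n-1}\bigr).
\]
Subtracting this from the previous display and dividing by $h_n^2$ yields exactly the claimed identity. There is no real obstacle here: everything reduces to recognising that the constant $h_n$ is tailored precisely so that $\sum_\sigma \sigma(\Delta_{n-1})$ symmetrises to $h_n \Delta_n$, after which the lemma is just the elementary expansion $(\sum a_\sigma)^2 = \sum a_\sigma^2 + \sum_{\sigma \neq \tau} a_\sigma a_\tau$ applied inside the group algebra.
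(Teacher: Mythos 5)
Your proof is correct and follows essentially the same route as the paper's: both rest on the symmetrisation identity $\sum_{\sigma\in A_n}\sigma(\Delta_{n-1}) = h_n\Delta_n$ followed by squaring. The only cosmetic difference is that you obtain this identity from \cref{delta comp} (for $n\geqslant 4$, with $n=3$ handled separately), whereas the paper rederives it directly from \cref{delta using edges}; these are the same calculation packaged differently.
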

\begin{proof}
By \cref{delta using edges}, we have
\begin{align*}
\frac 2 {(n-2)\cdot (n-1)!} \sum_{\sigma \in A_{n}} \sigma(\Delta_{n-1})
&= \frac 2 {(n-2)\cdot (n-3)! \cdot (n-1)!} \sum_{\sigma \in A_{n}} \sigma\big( \sum_{\tau \in A_{n-1}} \tau(\Delta_2)\big) \\
&= \frac 2 {(n-2)\cdot (n-3)! \cdot (n-1)!} \sum_{\sigma \in A_{n}} \vert A_{n-1} \vert \sigma(\Delta_2) \\
&=  \frac {2 \cdot (n-1)!} {2 \cdot (n-2)! \cdot (n-1)!} \sum_{\sigma \in A_{n}} \sigma(\Delta_2) \\
&=  \frac {1} {(n-2)! } \sum_{\sigma \in A_{n}} \sigma(\Delta_2) \\
&= \Delta_{n}.
\end{align*}
Using the definition of $h_n$ we may rewrite this as
\[
\Delta_{n} = \frac{1}{h_{n}}\sum_{\sigma \in A_{n}}\sigma(\Delta_{n-1}).
\]
Then
\begin{multline*}
{\Delta_{n}}^2 - \lambda \Delta_{n} =
  \frac{1}{{h_{n}}^2}\left(
    \left(
      \sum_{\sigma\in A_{n}}\sigma(\Delta_{n-1})
    \right)^2 - h_{n}\lambda \sum_{\sigma \in A_{n}}\sigma(\Delta_{n-1})
  \right)\\ =
  \frac{1}{{h_{n}}^2}\left(
    \left(
      \sum_{\sigma \in A_{n}} \sigma\left(
        \Delta_{n-1}^2 - h_{n}\lambda \Delta_{n-1}
      \right)
    \right) +
    \sum_{\sigma \in A_{n}} \left(
      \sigma\left(
        \Delta_{n-1}
      \right)
    \sum_{\tau \in A_{n}, \tau \neq \sigma} \tau\left(
      \Delta_{n-1}
      \right)
    \right)
  \right)\\
  = \frac 1 {{h_{n}}^2} \left(
    \sum_{\sigma \in A_{n}} \sigma\left(
      {\Delta_{n-1}}^2 -\lambda h_{n} \Delta_{n-1}
    \right) + X_{n}
  \right),
\end{multline*}
as desired.
\end{proof}

\begin{lem}
 \label{Xn}
 For $n \geqslant 4$ we have
 \[
   X_n = \alpha_n \Sq_n +
    \beta_n \Adj_n +
    \gamma_n \Op_n,
 \]
with
\begin{align*}
 \alpha_n &= h_n \left(h_n - \frac{n-2}{n-2}\right), &
 \beta_n &=  h_n \left(h_n - \frac{n-3}{n-2}\right), &
 \gamma_n &=  h_n \left(h_n - \frac{n-4}{n-2}\right).
\end{align*}
\end{lem}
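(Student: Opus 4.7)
The plan is to combine the identity $h_n\Delta_n = \sum_{\sigma\in A_n}\sigma(\Delta_{n-1})$ established inside the proof of \cref{lem: induction} with the decomposition $\Delta_n^2 = \Sq_n + \Adj_n + \Op_n$ from \cref{sum of sq adj op}. Squaring the first identity yields
\[
h_n^2\Delta_n^2 = \sum_{\sigma\in A_n}\sigma(\Delta_{n-1}^2) + X_n,
\]
so rearranging and applying \cref{sum of sq adj op} at both levels $n$ and $n-1$ gives
\[
X_n = h_n^2(\Sq_n+\Adj_n+\Op_n) - \sum_{\sigma\in A_n}\sigma(\Sq_{n-1}) - \sum_{\sigma\in A_n}\sigma(\Adj_{n-1}) - \sum_{\sigma\in A_n}\sigma(\Op_{n-1}).
\]

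The task is thus reduced to expressing each of the three symmetrised sums as a scalar multiple of the corresponding $n$-level quantity. For $\Adj$ and $\Op$ this is exactly what \cref{Adj comp,Op comp} compute: taking $m=n$ and replacing the lemma's $n$ by $n-1$, and simplifying the combinatorial prefactors against $h_n=(n-2)(n-1)!/2$, the plan gives
\[
\sum_{\sigma \in A_n} \sigma(\Adj_{n-1}) = \frac{n-3}{n-2}h_n\Adj_n \quad \text{and} \quad \sum_{\sigma \in A_n} \sigma(\Op_{n-1}) = \frac{n-4}{n-2}h_n\Op_n.
\]
The second identity formally needs $n\geqslant 5$, but for $n=4$ it reduces to $0=0$ because $\Op_3 = 0$. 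There is no analogue of these lemmata for $\Sq$ in the excerpt, so I would insert a short orbit--stabiliser count in its place: the coefficient of $\Delta_{e'}^2$ in $\sum_\sigma \sigma(\Sq_{n-1})$ is the number of pairs $(\sigma,e)$ with $e\in E_{n-1}$ and $\sigma(e)=e'$, namely $|E_{n-1}|\cdot|\mathrm{Stab}_{A_n}(e')| = \binom{n-1}{2}(n-2)! = h_n$, giving $\sum_\sigma \sigma(\Sq_{n-1}) = h_n\Sq_n$.

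Substituting these three identities and extracting a factor of $h_n$ from each term yields
\[
X_n = h_n\left(h_n-\frac{n-2}{n-2}\right)\Sq_n + h_n\left(h_n-\frac{n-3}{n-2}\right)\Adj_n + h_n\left(h_n-\frac{n-4}{n-2}\right)\Op_n,
\]
which is exactly $\alpha_n\Sq_n+\beta_n\Adj_n+\gamma_n\Op_n$. The main care needed is in the bookkeeping: collapsing the $(m-k)!$ prefactors from \cref{Adj comp,Op comp} against $h_n$, and dealing with the degenerate case $n=4$ (where \cref{Op comp} does not strictly apply) by invoking $\Op_3=0$. Everything else is formal substitution.
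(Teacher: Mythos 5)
Your proof is correct, and it is genuinely different from the one in the paper. The paper's proof observes directly that $X_n$, being $A_n$-invariant and of the form $\sum_{e,f}\lambda_{e,f}\Delta_e\Delta_f$, must be a linear combination $\alpha_n\Sq_n+\beta_n\Adj_n+\gamma_n\Op_n$, and then obtains each coefficient by a hands-on count: fixing an edge $e$, the hypersimplex $H$ on $\{1,\dots,n-1\}$, and a second edge $f$ in each of the three positions relative to $e$, it counts pairs $(\sigma,\tau)\in A_n^2$ with $\sigma\neq\tau$, $e\in\sigma(H)$, $f\in\tau(H)$. You instead rearrange the squared symmetrisation identity $h_n^2\Delta_n^2=\sum_\sigma\sigma(\Delta_{n-1}^2)+X_n$, expand both $\Delta_n^2$ and $\Delta_{n-1}^2$ via \cref{sum of sq adj op}, and then reduce everything to the symmetrisation formulae of \cref{Adj comp,Op comp} together with a short new count showing $\sum_{\sigma\in A_n}\sigma(\Sq_{n-1})=h_n\Sq_n$. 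I checked the prefactor bookkeeping: with $h_n=(n-2)(n-1)!/2$, the lemmas indeed give $\sum_\sigma\sigma(\Adj_{n-1})=\tfrac{n-3}{n-2}h_n\Adj_n$ and $\sum_\sigma\sigma(\Op_{n-1})=\tfrac{n-4}{n-2}h_n\Op_n$, your $\Sq$ count is correct, and the $n=4$ boundary case is handled properly via $\Op_3=0$. The trade-off is that your route avoids the pair-counting but is not self-contained -- it depends on \cref{Adj comp,Op comp} (and a supplementary $\Sq$ analogue) -- whereas the paper's proof of the lemma stands alone; on the other hand, your derivation makes transparent why the fractions $\tfrac{n-2}{n-2},\tfrac{n-3}{n-2},\tfrac{n-4}{n-2}$ appear: they come uniformly from comparing the symmetrisation coefficients of $\Sq$, $\Adj$, $\Op$ to $h_n$.
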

\begin{proof}
Using \cref{delta using edges} we may rewrite $X_n$ as
\begin{align*}
X_n &= \sum_{\sigma \in A_n} \left(
    \sigma\left(
        \Delta_{n-1}
    \right)
    \sum_{\tau \in A_n, \tau \neq \sigma} \tau \left(
        \Delta_{n-1}
    \right)
\right) \\
& = \sum_{\sigma \in A_n} \left(
    \sigma \Big(
        {\textstyle\sum_{e\in E_{n-1}}} \Delta_e
    \Big)
    \displaystyle \sum_{\tau \in A_n, \tau \neq \sigma}
    \tau \Big(
        {\textstyle\sum_{e\in E_{n-1}}} \Delta_e
    \Big)
\right).
\end{align*}

Expanding the above product, we obtain
\[
 X_n = \sum_{e,f \in E_n} \lambda_{e,f} \Delta_e \Delta_f
\]
with $\lambda_{e,f} \in \Z$ for every $e,f \in E_n$.
Since $X_n$ is $A_n$-invariant, we have
\[
 \lambda_{e,f} = \lambda_{\sigma(e), \sigma(f)}
\]
for every $\sigma \in A_n$.
In particular, $\lambda_{e,f}$ may take one of the three potentially distinct values, namely $\alpha_n = \lambda_{e,e}$ for every $e \in E_n$, $\beta_n =\lambda_{e,f}$ for every $e \in E_n$ and $f \in \Adj(e)$, and $\gamma_n = \lambda_{e,f}$ for every $e \in E_n$ and $f \in \Op(e)$.
Therefore
\begin{align*}
 X_n &= \alpha_n \Sq_n +
    \beta_n \Adj_n +
    \gamma_n \Op_n.
\end{align*}
The coefficients $\alpha_n, \beta_n$ and $\gamma_n$ may easily be computed: Fix an edge $e \in E_n$, and denote by $H$ the hypersimplex of $C_n$ spanned by vertices $\{1, \dots, n-1 \}$.

Let us start with $\alpha_n$.  The coefficient $\alpha_n$ is equal to the number of pairs $(\sigma, \tau) \in A_n$ such that $\sigma \neq \tau$ and $e \in \sigma(H) \cap \tau(H)$.
There are precisely $h_n$ elements $\sigma$ satisfying the above, and so there are $h_n -1$ elements $\tau$ as required. Therefore we have
\[
 \alpha_n = {h_n}(h_n -1) = h_n \left(h_n - \frac{n-2}{n-2}\right)
\]
as claimed.

Fix an edge $f \in \Adj(e)$.
The coefficient $\beta_n$ is equal to the number of pairs $(\sigma, \tau) \in A_n$ such that $\sigma \neq \tau$, $e \in \sigma(H)$ and $f \in \tau(H)$.
There are $h_n$ elements $\sigma$ as required: for $\frac {h_n} {n-2}$ of them, $\sigma(H)$ does not contain $f$; for the remaining $h_n -  \frac {h_n} {n-2}$ of these,  $\sigma(H)$ does contains $f$. Now, for a $\sigma$ of the former kind, there are precisely $h_n$ possible elements $\tau$; if $\sigma$ is of the latter kind, there are $h_n - 1$ possibilities for $\tau$. Carrying out the arithmetic yields
\[
  \beta_n = \frac {h_n} {n-2} \cdot h_n + \left(h_n - \frac {h_n} {n-2}\right) (h_n -1) = h_n \left(h_n - \frac{n-3}{n-2}\right).
\]

The computation for $\gamma_n$ is entirely analogous: we fix $f \in \Op_n$ and see that
\[
 \gamma_n = \frac {2 h_n} {n-2} \cdot h_n + \left(h_n - \frac {2 h_n} {n-2}\right)(h_n - 1)  =  h_n \left(h_n - \frac{n-4}{n-2}\right). \qedhere
\]
\end{proof}

\begin{prop}[Method II]
 \label{Adj+kOp>=0 Xn}
Suppose that there exist $R \geqslant 2$, $\lambda >0$, $k\geqslant 0$, $n\geqslant k+3$, and $\mu \geqslant 0$ such that
\[
{\Delta_{n-1}}^2 - \lambda \Delta_{n-1} \in \Sigma_R^2 \R G_{n-1}
\]
and
\[\Adj_5 + k \Op_5 - \mu \Delta_5 \in \Sigma_R^2 \R G_5.\]
Then $G_{n}$ has Kazhdan's property~(T). Moreover, the Kazhdan radius of $(G_{n},S_{n})$ is bounded above by $R$.
\end{prop}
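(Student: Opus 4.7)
The plan is to express $\Delta_n^2 - \nu\Delta_n \in \Sigma^2_R \R G_n$ for some $\nu > 0$ as a non-negative combination of three ingredients: one derived from the inductive hypothesis, one from the size-$5$ certificate, and the already-known SOS element $\Op_n$.

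First I would extract from the inductive hypothesis the element $A := \Delta_n^2 - \lambda\Delta_n - \tfrac{1}{n-2}(\Adj_n + 2\Op_n)$ and show $A \in \Sigma^2_R \R G_n$. This uses \cref{lem: induction} with its free parameter rescaled to $\lambda/h_n$ (so that the ``$\lambda h_n$'' on its right-hand side becomes the given $\lambda$): since $A_n$ permutes $S_n$ and therefore preserves the filtration $\Sigma^2_R$, the sum $\sum_{\sigma \in A_n}\sigma(\Delta_{n-1}^2 - \lambda \Delta_{n-1})$ lies in $\Sigma^2_R \R G_n$; substituting the decomposition $X_n = h_n(h_n-1)\Delta_n^2 + \tfrac{h_n}{n-2}(\Adj_n + 2\Op_n)$ (obtained from \cref{Xn} by collapsing the $\Sq_n$ term via $\Sq_n + \Adj_n + \Op_n = \Delta_n^2$) and dividing by $h_n$ gives $A$.

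Next I would average the size-$5$ certificate over $A_n$; by \cref{delta comp,Adj comp,Op comp} the resulting element is a positive scalar multiple of
\[
Z \;:=\; \Adj_n + \tfrac{2k}{n-3}\Op_n - \tfrac{\mu(n-2)}{3}\Delta_n \;\in\; \Sigma^2_R \R G_n.
\]
The final step is the identity
\[
A \;+\; \tfrac{1}{n-2}\,Z \;+\; \tfrac{2(n-3-k)}{(n-2)(n-3)}\,\Op_n \;=\; \Delta_n^2 - \bigl(\lambda + \tfrac{\mu}{3}\bigr)\Delta_n,
\]
which is a direct coefficient check: the factor $\tfrac{1}{n-2}$ is forced in order to cancel the $\Adj_n$ contribution from $A$, and the three $\Op_n$ contributions then telescope to zero.

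The hypothesis $n \geqslant k+3$ is used exactly once, to guarantee $\tfrac{2(n-3-k)}{(n-2)(n-3)} \geqslant 0$; combined with $\Op_n \in \Sigma^2_2 \R G_n \subseteq \Sigma^2_R \R G_n$ (from \cref{sq op sos} and $R \geqslant 2$), the left-hand side is a non-negative combination of elements of $\Sigma^2_R \R G_n$. Hence $\Delta_n^2 - (\lambda + \mu/3)\Delta_n \in \Sigma^2_R \R G_n$, and Ozawa's criterion \cite{Ozawa2016} gives property~(T) for $G_n$ with Kazhdan radius at most $R$. The only real obstacle is the arithmetic bookkeeping of the combination; once the $\tfrac{1}{n-2}$-weighting of $Z$ is identified from $\Adj_n$-cancellation, both the auxiliary $\Op_n$ coefficient and the constraint $n \geqslant k+3$ are forced.
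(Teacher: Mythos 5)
Your proof is correct and relies on the same lemmas as the paper (\cref{lem: induction}, \cref{Xn}, \cref{sum of sq adj op}, \cref{sq op sos}, and the averaging identity computed inside the proof of \cref{method I}), but the bookkeeping is organized differently, and this actually matters. The paper substitutes $\lambda_0 = \lambda/h_n$ into \cref{lem: induction}, then splits off the term $X_n - \tfrac{\mu h_n}{3}\Delta_n$ and bounds it separately by writing $\tfrac{n-2}{h_n}X_n = \big((n-2)(h_n-1)\big)\Delta_n^2 + \Adj_n + 2\Op_n$; the large $\Delta_n^2$-coefficient is then simply discarded via $\Delta_n^2 \in \Sigma^2_1$, so the spectral gap obtained is $\tfrac{3\lambda+\mu}{3h_n}$. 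You instead substitute \cref{Xn} fully into the inductive identity to extract $A = \tfrac{1}{h_n}\sum_\sigma\sigma(\Delta_{n-1}^2 - \lambda\Delta_{n-1}) = \Delta_n^2 - \lambda\Delta_n - \tfrac{1}{n-2}(\Adj_n + 2\Op_n)$, which cancels the $\Delta_n^2$-part of $X_n$ against the left-hand side instead of discarding it. The resulting identity
\[
A + \tfrac{1}{n-2}\,Z + \tfrac{2(n-3-k)}{(n-2)(n-3)}\,\Op_n \;=\; \Delta_n^2 - \bigl(\lambda + \tfrac{\mu}{3}\bigr)\Delta_n
\]
is a non-negative combination of elements of $\Sigma^2_R\R G_n$ exactly when $n\geqslant k+3$, which proves the proposition. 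The upshot is that your version gives the sharper spectral gap $\lambda + \mu/3 = \tfrac{3\lambda+\mu}{3}$, which is larger by a factor of $h_n$ than the paper's $\tfrac{3\lambda+\mu}{3h_n}$ and, unlike the latter, does not decay with $n$. This improves the bound in \cref{Adj+kOp>=0 Xn Kaz} (and hence the estimate for $\kappa_6$) by a factor of $\sqrt{h_n}$. One small caveat: for the averaging of the size-$5$ certificate over $A_n$ to make sense, one implicitly needs $n\geqslant 5$ (this is also implicit in the paper's proof, and holds in the only case where the proposition is applied, namely $n=6$).
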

\begin{proof}
By \cref{lem: induction}, we have
\[
{\Delta_n}^2 - \frac {3\lambda + \mu} {3 h_{n}}\Delta_n =
  \frac{1}{{h_n}^2}
    \sum_{\sigma \in A_{n}} \sigma\left(
      {\Delta_{n-1}}^2 -\lambda  \Delta_{n-1}
    \right) +
  \frac{1}{{h_n}^2}  \left( X_{n}  - \frac{\mu h_n}{3} \Delta_n \right).
\]
Therefore, it suffices to show that $X_{n}  - \frac  {\mu h_n} 3 \Delta_n \in \Sigma^2_R \R G_n$.

By \cref{Xn} we have
\begin{align*}
 \frac {n-2} {h_n} X_n &= \big((n-2)h_n - n+2\big) \left(\Sq_n + \Adj_n + \Op_n\right) + \Adj_n + 2 \Op_n.
\end{align*}
The element
$\Sq_n + \Adj_n + \Op_n = {\Delta_n}^2 \in \Sigma^2_1 \R G$,
by \cref{sum of sq adj op}, so we are left with showing that
$\Adj_n + 2 \Op_n  - \frac{\mu(n-2)}{3} \Delta_n \in \Sigma^2_R\R G$.

By assumption, $\Adj_5 + k \Op_5 - \mu \Delta_5 \in \Sigma^2_R \R G_5$ and we have already computed in the proof of \cref{Adj+kOp>=0} that
\begin{align*}
\frac 1 {30 \cdot (n-3)!}
\sum_{\sigma \in A_n}
  \sigma\left(
    \Adj_5 + k \Op_5 - \mu \Delta_5
  \right) & =
\Adj_n + \frac{2k}{n-3} \Op_n - \frac {\mu(n-2)} 3 \Delta_n.
\end{align*}
Since $n \geqslant k+3$, and since $\Op_n \in \Sigma_R^2\R G_n$ by \cref{sq op sos}, we conclude that
\[
\Adj_n + 2 \Op_n - \frac {\mu(n-2)} 3 \Delta_n \in \Sigma_R^2 \R G_n
\]
which completes the proof.
\end{proof}

\begin{cor}
\label{Adj+kOp>=0 Xn Kaz}
Under the same assumptions as above,
the spectral gap of $\Delta_n$ is bounded below by $\frac {3\lambda + \mu} {3 h_{n}}$, which leads to lower bound of \[\sqrt{\frac {2(3\lambda + \mu)}{3 h_{n}|S_n|}} \leqslant \kappa(G_n, S_n)\]
for the Kazhdan constant.
\end{cor}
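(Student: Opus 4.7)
The plan is simply to combine the explicit sum-of-squares decomposition obtained in the proof of \cref{Adj+kOp>=0 Xn} with the classical translation of a Laplacian spectral gap into a Kazhdan constant bound. Since this is a corollary, essentially no new work is needed -- the point is only to read off the spectral gap constant from the preceding proof.

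First I would observe that the proof of \cref{Adj+kOp>=0 Xn} does not merely assert property (T) in the abstract: it exhibits an explicit element
\[
{\Delta_n}^2 - \frac{3\lambda + \mu}{3 h_n} \Delta_n \in \Sigma^2_R \R G_n.
\]
By Ozawa's theorem \cite{Ozawa2016}, such an identity is equivalent to the statement that, in every unitary representation of $G_n$, the operator $\Delta_n$ has spectral gap at least $\frac{3\lambda + \mu}{3 h_n}$ on the orthogonal complement of the subspace of invariant vectors. This is the first assertion of the corollary.

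Next, I would invoke \cite{Bekkaetal2008}*{Remark 5.4.7}, which gives the standard conversion: whenever $\Delta$ has spectral gap at least $c > 0$ with respect to a finite symmetric generating set $S$, the associated Kazhdan constant satisfies $\kappa(G, S) \geqslant \sqrt{2c/|S|}$. Substituting $c = \frac{3\lambda + \mu}{3 h_n}$ and $|S| = |S_n|$ yields exactly the bound
\[
\sqrt{\frac{2(3\lambda + \mu)}{3 h_n |S_n|}} \leqslant \kappa(G_n, S_n),
\]
as stated. There is no genuine obstacle here; both ingredients are already available, and the corollary is immediate once one extracts the relevant constant from the proof of \cref{Adj+kOp>=0 Xn}. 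This parallels precisely the corollary that followed Method~I, with the only change being the replacement of $\frac{\lambda(m-2)}{n-2}$ by $\frac{3\lambda + \mu}{3h_n}$.
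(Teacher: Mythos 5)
Your proposal is correct and matches the paper's (implicit) reasoning: the paper does not write out a proof for this corollary, but its analogous Method~I corollary explicitly cites \cite{Bekkaetal2008}*{Remark 5.4.7} for the conversion from spectral gap to Kazhdan constant, and you correctly read off the spectral gap $\frac{3\lambda+\mu}{3h_n}$ from the displayed identity in the proof of \cref{Adj+kOp>=0 Xn}. Nothing is missing.
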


\subsection{Approximating sum of squares}

Let $IG^h$ denote the set of $*$-invariant elements in the augmentation ideal $IG$ of $\R G$.
To prove the  positivity of the element $\Delta^2 - \lambda \Delta$, a numerical method for finding a decomposition into a sum of squares in $\R G$ has been applied by \cites{Netzer2015, Fujiwara2017, KalubaNowak2018} and further developed in \cite{Kalubaetal2017}.
In fact, the method provides a proof of the existence of a sum of squares decomposition of the element.
However, a similar method can be employed to prove the existence of such decompositions for other elements in  $IG^h$.

\smallskip

We use the notation $a \leqslant_R b$ to denote that $b-a \in \Sigma^2_R IG$.

\begin{lem}
Let $a \in G$ be of word-length bounded above by $R > 1$. Then
\[(2 - a^* - a) \leqslant_{\lceil R/2\rceil} \varepsilon\Delta,\]
 for every $\varepsilon \geqslant \SOSconedist{R}$,
where $\lceil \cdot \rceil$ denotes the ceiling function.
\end{lem}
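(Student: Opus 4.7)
The plan is to induct on $\lceil \log_2 R \rceil$, driven by the parallelogram identity $(x+y)^{*}(x+y)+(x-y)^{*}(x-y)=2x^{*}x+2y^{*}y$ in $\R G$.  Applying it to $x=1-b^{-1}$ and $y=1-c$, one computes $(x-y)^{*}(x-y)=(c-b^{-1})^{*}(c-b^{-1})=2-bc-(bc)^{*}$.  Hence for any factorisation $a = bc$,
\[
2 - a - a^{*} \;=\; 2(1-b)^{*}(1-b) + 2(1-c)^{*}(1-c) - (2 - b^{-1} - c)^{*}(2 - b^{-1} - c).
\]
The decisive feature is that the correction term $2-b^{-1}-c$ is supported on $B_{\max(|b|,|c|)}$, in contrast with the naive telescoping $1-bc=(1-b)+b(1-c)$, whose cross-term $b-bc$ has length $|a|$ and would obstruct the halving of the radius.

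For the inductive step, given $|a|\leqslant R$, write $a = bc$ with $|b|,|c|\leqslant\lceil R/2\rceil$ (always possible, using $b=1$ or $c=1$ in degenerate cases).  Rearranging the identity,
\[
\varepsilon\Delta - (2-a-a^{*}) \;=\; \bigl(\varepsilon\Delta - 2(1-b)^{*}(1-b) - 2(1-c)^{*}(1-c)\bigr) + (2-b^{-1}-c)^{*}(2-b^{-1}-c).
\]
The last summand is a single square of an element of $IG$ supported in $B_{\lceil R/2\rceil}$.  For the bracketed part, invoke the induction hypothesis on $b$ and on $c$ separately: each yields $(1-b)^{*}(1-b)\leqslant_{\lceil R/4\rceil}\SOSconedist{\lceil R/2\rceil}\Delta$, with the decompositions lying in $\Sigma^{2}_{\lceil R/4\rceil}IG\subseteq\Sigma^{2}_{\lceil R/2\rceil}IG$.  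The coefficients line up thanks to the arithmetic identity $4\cdot\SOSconedist{\lceil R/2\rceil}=\SOSconedist{R}$, which reduces to $\lceil\log_{2}\lceil R/2\rceil\rceil=\lceil\log_{2}R\rceil-1$ and is easily verified for every $R\geqslant 2$.  Finally, passing from $\varepsilon=\SOSconedist{R}$ to arbitrary $\varepsilon\geqslant\SOSconedist{R}$ costs only a positive multiple of $\Delta\in\Sigma^{2}_{1}IG$.

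The base case $R=2$ is handled directly: after the same splitting ($|b|,|c|\leqslant 1$), the bracket equals $4\Delta-2(1-b)^{*}(1-b)-2(1-c)^{*}(1-c)$; using $\Delta=\tfrac12\sum_{s\in S}(1-s)^{*}(1-s)$ and the coincidence $(1-s)^{*}(1-s)=(1-s^{-1})^{*}(1-s^{-1})$, a direct coefficient check shows that no summand $(1-s)^{*}(1-s)$ acquires a negative coefficient, so the bracket lies in $\Sigma^{2}_{1}IG$.  The only genuinely delicate point of the whole argument is identifying the parallelogram correction as the right cross-term to subtract: once this is done the supports stay confined to $B_{\lceil R/2\rceil}$, and what remains is routine bookkeeping on $\SOSconedist{\cdot}$ through the halving step.
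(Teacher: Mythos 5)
Your proof is correct and follows essentially the same route as the paper's. The identity you derive from the parallelogram law applied to $x=1-b^{-1}$, $y=1-c$, namely
\[
2 - a - a^{*} = 2(1-b)^{*}(1-b) + 2(1-c)^{*}(1-c) - (2 - b^{-1} - c)^{*}(2 - b^{-1} - c),
\]
is exactly the one the paper obtains by expanding $(1-uw)^{*}(1-uw)$ and completing the square, just phrased more invariantly; the inductive skeleton, the splitting $a=bc$ with $\lvert b\rvert,\lvert c\rvert\leqslant\lceil R/2\rceil$, the confinement of the cross-term to $B_{\lceil R/2\rceil}$, and the bookkeeping identity $4\cdot\SOSconedist{\lceil R/2\rceil}=\SOSconedist{R}$ all coincide with the paper's argument (the paper verifies the latter via monotonicity and the power-of-two case rather than the $\lceil\log_2\rceil$ identity, but that is only a cosmetic difference). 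One small remark: the paper treats $R=1$ as an explicit auxiliary base (with the weaker bound $2\Delta$) and then handles $R=2$ directly before starting the induction at $R\geqslant 3$; your direct verification of the $R=2$ base, including the coefficient check against $2\sum_{s\in S}(1-s)^*(1-s)$ with the pairing of $s$ and $s^{-1}$, accomplishes the same thing and is sound, but it is worth noting explicitly that degenerate factorisations with $b=1$ or $c=1$ (needed when $\lvert a\rvert\leqslant 1$) are also covered, which you gesture at without spelling out.
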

\begin{proof}

The proof is an induction on the word-length of $a$.

The case $R=1$ is needed in subsequent steps of the induction, and although not covered by the statement the conclusion holds with a larger $\varepsilon$. Let $a=s$ be a generator of $G$. Then
\[(2-s^* - s) = (1-s)^*(1-s) \leqslant_1 2\Delta.\]
If $a = st$ is a product of two generators, then
\[(2 - (st)^* - st) =
  2\left((1-s)^*(1-s)+ (1-t)^*(1-t)\right) -
  (2 - s^* - t)^*(2 - s^* - t),
\]
hence $(2 - (st)^* - st) \leqslant_1 4\Delta$.
For every $\varepsilon\geqslant4$ we have
\[\varepsilon\Delta - (2 - (st)^* - st) = (\varepsilon-4)\Delta + \left(4\Delta - (2 - (st)^* - st)\right)\]
which admits a decomposition into sum of squares supported in $B_1$.

Suppose that $a=uw$ is of word-length $R$ and each $u,w\in G$ is of word-length at most $\lceil R/2\rceil$.
In the following we use the notation $U = (1-u)$ and $W = (1-w)$. We have
\begin{align*}
(2-a^* - a) & = (1-uw)^*(1-uw)\\
& = (U + uW)^*(U+uW)\\
& = U^*U + W^*u^*uW + U^*uW + W^*u^*U\\
&= U^*U + W^*W - UW - W^*U^*,
\intertext{and therefore}
2\big(U^*U + W^*W\big)-(2-a^* - a)
& = U^*U + W^*W + UW + W^*U^*\\
& = UU^* +UW + W^*U^* + W^*W\\
& = (U^*+W)^*(U^*+W) \\
& = (2-u^*-w)^*(2-u^*-w).
\end{align*}

By the inductive step, for $\varepsilon_0 = \SOSconedist{\lceil R/2 \rceil }$ we have both $(2-u^*-u)$ and $(2-w^*-w)\leqslant_{\lceil R/4 \rceil} \varepsilon_0\Delta$, i.e.
\[
  \varepsilon_0 \Delta -U^*U = \sum_i\mu_i^*\mu_i
  \quad \text{and} \quad
  \varepsilon_0 \Delta - W^*W = \sum_i\nu_i^*\nu_i,
\]
where each $\mu_i$ and $\nu_i$ is supported on the ball of radius $\lceil R/4\rceil$.
Moreover
\begin{multline*}
4\varepsilon_0 \Delta +
2\left((U^*U - \varepsilon_0 \Delta) +
(W^*W - \varepsilon_0 \Delta)\right) - (2-(uw)^* - uw) =\\
(2-u^* - w)^*(2-u^* -w).
\end{multline*}
We have
$\SOSconedist{R}= 4 \cdot \SOSconedist{\lceil R/2 \rceil}$. Indeed, it is easy to check that the equality holds when $R$ is a power of $2$.
If $R$ is not, then
\begin{align*}
\SOSconedist{R}= \SOSconedist{(R+1)} \geqslant \SOSconedist{(2 \lceil R/2 \rceil)} = 4 \cdot \SOSconedist{\lceil R/2 \rceil}
\end{align*}
and
\begin{align*}
4 \cdot \SOSconedist{\lceil R/2 \rceil} = \SOSconedist{(2 \lceil R/2 \rceil)} \geqslant  \SOSconedist{R}.
\end{align*}

Now, for every $\varepsilon \geqslant \SOSconedist{R}=4\varepsilon_0$ we obtain
\begin{align*}
\varepsilon \Delta - (2-a^* - a)
& = (2-u^*-w)^*(2-u^*-w) \\
& \quad + 2\left(\frac{\varepsilon}{4}\Delta - U^*U\right) + 2\left(\frac{\varepsilon}{4}\Delta - W^*W \right)\\
& = (2-u^*-w)^*(2-u^*-w) \\
&  \quad + 2\sum_i \mu_i^*\mu_i + 2 \sum_i \nu_i^*\nu_i + 2\left(\frac{\varepsilon}{4} - \varepsilon_0\right) \Delta,
\end{align*}
i.e. $(2-a^* - a) \leqslant_{\lceil R/2\rceil}\varepsilon \Delta$, as stated.

Note that if none of the generators in $S$ is involutive, the start of the induction provides a better estimate:
\[
-(1-s)^*(1-s) \leqslant_1 \Delta \quad\text{and}\quad -(1-st)^*(1-st) \leqslant_1 2\Delta,
\]
and we can use $\varepsilon \geqslant \SOSconedistm{R}{-1}$.
\end{proof}

\begin{dfn}
We say that an element $x \in IG^h$ admits an \emph{$\varepsilon$-approximate sum of squares decomposition supported in $B_R$} if $x + \varepsilon \Delta \in \Sigma^2_R IG$.
\end{dfn}

\begin{cor}\label{cor:asos-lambdaDelta}
Let $y\in IG^h$ and suppose that $y-\lambda \Delta$ admits an $\varepsilon$\nobreakdash-\hspace{0pt}approximate sum of squares decomposition supported in $B_R$,
with $\lambda \geqslant \varepsilon$ and $R \geqslant 1$. 
 Then $y \in \Sigma^2_R IG$.
\end{cor}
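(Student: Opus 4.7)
The plan is to unwind the definitions and observe that $\Sigma^2_R IG$ is a convex cone (closed under addition and nonnegative scalar multiplication), so the claim reduces to writing $y$ as a sum of two elements already known to lie in $\Sigma^2_R IG$.

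First, I would unpack the hypothesis. By the definition preceding the corollary, the statement that $y - \lambda\Delta$ admits an $\varepsilon$-approximate sum of squares decomposition supported in $B_R$ means precisely that
\[
(y - \lambda\Delta) + \varepsilon \Delta \;=\; y - (\lambda - \varepsilon)\Delta \;\in\; \Sigma^2_R IG.
\]
The goal is to then write
\[
y \;=\; \bigl(y - (\lambda - \varepsilon)\Delta\bigr) + (\lambda - \varepsilon)\Delta,
\]
and show both summands lie in $\Sigma^2_R IG$.

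The first summand is in $\Sigma^2_R IG$ by the above reformulation of the hypothesis. For the second summand, recall from \eqref{eq:theLaplacian} that
\[
\Delta \;=\; \frac{1}{2}\sum_{s\in S}(1-s)^*(1-s),
\]
so $\Delta \in \Sigma^2_1 IG \subseteq \Sigma^2_R IG$ since $R \geqslant 1$. Because $\lambda \geqslant \varepsilon$, the scalar $\lambda - \varepsilon$ is nonnegative, and hence $(\lambda - \varepsilon)\Delta \in \Sigma^2_R IG$ by the fact that $\Sigma^2_R IG$ is closed under multiplication by nonnegative reals (if $\Delta = \sum_i \xi_i^*\xi_i$ with each $\xi_i$ supported in $B_R$, then $(\lambda-\varepsilon)\Delta = \sum_i(\sqrt{\lambda-\varepsilon}\,\xi_i)^*(\sqrt{\lambda-\varepsilon}\,\xi_i)$).

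There is no real obstacle here — the content of the corollary is bookkeeping which packages the previous lemma into a usable form. The only points to check are the sign condition $\lambda \geqslant \varepsilon$ (which guarantees a nonnegative scalar multiple of $\Delta$) and the radius condition $R \geqslant 1$ (which guarantees $\Delta$ itself is supported within the admissible ball). Both are explicitly assumed.
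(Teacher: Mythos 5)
Your proof is correct and takes the same approach as the paper: decompose $y = \bigl(y - (\lambda-\varepsilon)\Delta\bigr) + (\lambda-\varepsilon)\Delta$ and observe both terms lie in $\Sigma^2_R IG$. The paper's version is terser (it leaves the verification that $(\lambda-\varepsilon)\Delta \in \Sigma^2_R IG$ implicit), while you spell out the convex-cone bookkeeping and the use of $R \geqslant 1$, but the argument is the same.
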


\begin{proof}
Set $x = y-\lambda \Delta$.
By definition,
\[y - \left(\lambda- \varepsilon\right)\Delta = x + \varepsilon\Delta = \sum \mu_i^*\mu_i\]
where each $\mu_i$ is supported in $B_R$.
Therefore
 \[y = \sum \mu_i^*\mu_i + \left(\lambda- \epsilon\right)\Delta\]
belongs to $\Sigma^2_R IG$ as long as $\lambda \geqslant \varepsilon$.
\end{proof}

\begin{lem}\label{lem: asos}
Let $ x\in IG^h$.
Suppose that $b \leqslant_R x$ for some $b$ supported in $B_{2R}$.
Then
$x$ admits an $\varepsilon$-approximate sum of squares decomposition supported in $B_R$ for every
$\varepsilon \geqslant \SOSconedist{R}\|b\|_1$.
\end{lem}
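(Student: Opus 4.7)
The plan is to reduce to showing $b+\varepsilon\Delta\in\Sigma_R^2 IG$, then to decompose $b$ in the group basis and absorb the two signs of coefficient using two complementary identities. Since $b\leqslant_R x$ gives $x-b\in\Sigma_R^2 IG$ (which also forces $b\in IG^h$) and $\Sigma_R^2 IG$ is closed under addition, it is enough to prove $b+\varepsilon\Delta\in\Sigma_R^2 IG$.

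Write $b=\sum_a c_a a$. The $*$-invariance gives $c_a=c_{a^{-1}}$, augmentation gives $\sum_a c_a=0$, and the identity $a+a^{-1}=2-f_a$ with $f_a=(1-a)^*(1-a)$ symmetrises $b$ to
\[
b=-\tfrac{1}{2}\sum_{a\neq 1}c_a f_a = \tfrac{1}{2}\sum_{c_a<0}|c_a|f_a-\tfrac{1}{2}\sum_{c_a>0}c_a f_a.
\]
For the first summand I would use the direct identity
\[
f_{uw}=(w-u^*)^*(w-u^*),
\]
valid for every $u,w\in G$ by direct expansion (it uses $w^*w=uu^*=1$). Every $a\in B_{2R}\setminus\{1\}$ factors as $a=uw$ with $u,w\in B_R$, and then $w-u^*$ is supported in $B_R$, so $f_a\in\Sigma_R^2 IG$ and hence $\tfrac{1}{2}\sum_{c_a<0}|c_a|f_a\in\Sigma_R^2 IG$ as a positive combination.

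The second summand I would handle by applying the previous lemma with $R\mapsto 2R$, which gives $f_a\leqslant_R\SOSconedist{2R}\Delta$ for every $a\in B_{2R}$. Scaling by $c_a>0$ and summing,
\[
\tfrac{\SOSconedist{2R}}{2}C^+\Delta-\tfrac{1}{2}\sum_{c_a>0}c_a f_a\in\Sigma_R^2 IG,\qquad C^+:=\sum_{c_a>0}c_a.
\]
The quantitative heart of the proof is the bound $C^+\leqslant\|b\|_1/2$: with $C^-=\sum_{c_a<0}|c_a|$ and $c_1=-\sum_{a\neq 1}c_a$, we have $C^+-C^-=-c_1$, hence $\|b\|_1=|c_1|+C^++C^-=2\max(C^+,C^-)\geqslant 2C^+$. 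Using $\SOSconedist{2R}=4\SOSconedist{R}$ then yields $\tfrac{\SOSconedist{2R}C^+}{2}\leqslant\SOSconedist{R}\|b\|_1\leqslant\varepsilon$, so that
\[
b+\varepsilon\Delta = \tfrac{1}{2}\sum_{c_a<0}|c_a|f_a + \Big[\tfrac{\SOSconedist{2R}C^+}{2}\Delta-\tfrac{1}{2}\sum_{c_a>0}c_a f_a\Big] + \Big(\varepsilon-\tfrac{\SOSconedist{2R}C^+}{2}\Big)\Delta
\]
is a sum of three elements of $\Sigma_R^2 IG$.

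The main point to get right is to deploy the two identities complementarily: the identity $f_{uw}=(w-u^*)^*(w-u^*)$ places $f_a$ in $\Sigma_R^2$ for $a\in B_{2R}$ directly (which would not follow from the previous lemma alone, as that only bounds $f_a$ from above by a multiple of $\Delta$), which lets the $c_a<0$ half be absorbed without $\Delta$-compensation; meanwhile the previous lemma handles the $c_a>0$ half. The sharp constant $\SOSconedist{R}\|b\|_1$ (rather than $\SOSconedist{2R}\|b\|_1/2$) comes precisely from combining this split with the bound $C^+\leqslant\|b\|_1/2$.
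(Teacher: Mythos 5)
Your proof is correct and follows essentially the same strategy as the paper's: both split the coefficients of $b$ by sign, place the negative-coefficient terms directly into $\Sigma^2_R IG$ via the factorisation $(1-uw)^*(1-uw)=(w-u^*)^*(w-u^*)$, and absorb the positive-coefficient terms with a multiple of $\Delta$ using the preceding lemma. The only difference is presentational: the paper modifies $b$ in place (discarding negative-coefficient terms by merging them into the sum of squares witnessing $x-b$, implicitly relying on the fact that this can only decrease $\|b\|_1$), whereas you keep $b$ fixed, prove $b+\varepsilon\Delta\in\Sigma^2_R IG$ outright, and make the bookkeeping $C^+\leqslant\|b\|_1/2$ explicit; the latter step is exactly what the paper uses implicitly, so the two arguments land on the same constant.
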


\begin{proof}
The lemma is a specialisation of \cite{Netzer2015}*{Lemma 2.1} which tracks the exact supports of constructed sum of squares decompositions.


Recall that $b \leqslant_R x$ means that
 \[x - b = \sum \xi_i^* \xi_i \in \Sigma^2_{R} IG .\]
We necessarily have $b \in IG^h$, and so we may write it as
\[b =
\sum_{g\in \supp{b}} b_g g =
\sum_{1\neq g\in \supp{b}}
\frac{-b_g}{2}(2 - g^* - g),\]
where $\supp{b}\subseteq B_{2R}$.

If some coefficient $b_g$ is negative, then we may include $\frac{|b_g|}{2}(2-g ^*-g)$ in the sum of squares decomposition $\Sigma \xi_i^*\xi_i$ on the right hand side.
This operation does not alter $R$, the radius of sum of squares decomposition.
Indeed, we may write $g = uw$ with both $u$ and $w$ in $B_R$. We then have  $(1-g)^*(1-g) = (u^*-w)^*(u^*-w)$.
Therefore, we may assume that all coefficients $b_g$ of $b$ are positive for $g \neq 1$.

By the previous lemma, 
for every non-trivial $g \in B_{2R}$ we have
\[
  \frac{b_g}{2}(1-g)^*(1-g) \leqslant_{R}\frac{b_g}{2}\delta\Delta,
\]
for every $\delta\geqslant\SOSconedist{2R}$.
Since every $b_g$ is positive (for $g \neq 1$), we have
\[
- b = \sum_{g \neq 1} \frac{b_g}{2}(1-g)^*(1-g) \leqslant_{R}
\sum_{g \neq 1} \frac{b_g}{2}\delta \Delta =
\frac{\Vert b \Vert_1}{4}\delta \Delta = \varepsilon \Delta,
\]
where we set $\delta = \frac {4 \varepsilon}{ \|b\|_1}$. Therefore $\varepsilon\Delta + b \in \Sigma^2_R IG$.
Finally,
\[x +\varepsilon\Delta = \sum_i \xi_i^*\xi_i + \left(b + \varepsilon\Delta \right)\]
provides the required sum of squares decomposition.

As in the previous lemma, we could use $\varepsilon \geqslant \SOSconedistm{R}{-1}\Vert b \Vert_1$ if none of the generators of $G$ were involutive.
\end{proof}

\section{Applications}

\subsection{Property (T) for \texorpdfstring{$\SL_n(\Z)$}{SLn(Z)}}

Let $\Delta_3$ denote the Laplacian of $\SL_3(\Z)$ with respect to the standard generating set.
It was shown in \cite{KalubaNowak2018} that
\[{\Delta_3}^2 - 0.2799\, \Delta_3 - b \in \Sigma^2_2 I\SL_3(\Z),\]
with remainder $b$ supported in $B_4$, such that $\|b\|_1 < 10^{-9}$.
This constitutes a $\left(4\cdot 10^{-9}\right)$-approximate sum of squares decomposition supported in $B_2$ of $x = {\Delta_3}^2 - 0.2799\, \Delta_3$
by \cref{lem: asos}, which yields the following proposition by \cref{cor:asos-lambdaDelta}.

\begin{prop}[\cite{KalubaNowak2018}]
\label{sl3}
The group $\SL_3(\Z)$ has Kazhdan's property~(T).
Moreover, endowed with the standard generating set, its Kazhdan radius is at most $2$.
\end{prop}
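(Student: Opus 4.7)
The plan is straightforward: chain together the certified numerical SOS decomposition supplied by \cite{KalubaNowak2018} with \cref{lem: asos} and \cref{cor:asos-lambdaDelta}, both established in the preceding section. No new estimates are needed; the proposition is really a packaging exercise.

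First, I would recast the quoted decomposition ${\Delta_3}^2 - 0.2799\, \Delta_3 - b \in \Sigma^2_2 I\SL_3(\Z)$ in the $\leqslant_R$ notation as $b \leqslant_2 x$, where $x = {\Delta_3}^2 - 0.2799\, \Delta_3 \in I\SL_3(\Z)^h$. Since $b$ is supported in $B_4 = B_{2 \cdot 2}$ and $\|b\|_1 < 10^{-9}$, and since $\SOSconedist{2} = 4$, \cref{lem: asos} with $R = 2$ produces an $\varepsilon$-approximate sum of squares decomposition of $x$ supported in $B_2$, where $\varepsilon = 4 \cdot 10^{-9}$ suffices.

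Finally, I would set $\lambda = \varepsilon = 4 \cdot 10^{-9}$ and put $y = x + \lambda \Delta_3 = {\Delta_3}^2 - (0.2799 - \varepsilon)\, \Delta_3$. Then $y - \lambda \Delta_3 = x$ admits the approximate decomposition just produced, and the hypothesis $\lambda \geqslant \varepsilon$ holds trivially, so \cref{cor:asos-lambdaDelta} yields $y \in \Sigma^2_2 I\SL_3(\Z)$. This is precisely property (T) for $\SL_3(\Z)$, witnessed by the Kazhdan constant $0.2799 - 4\cdot 10^{-9} > 0$ and certified at Kazhdan radius at most $2$. The genuinely hard step, namely producing and rigorously verifying the small-residual SOS decomposition of $x$, has already been carried out via semidefinite programming in \cite{KalubaNowak2018}, so there is no further obstacle.
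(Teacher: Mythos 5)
Your argument is exactly the paper's: recast the quoted decomposition as $b \leqslant_2 x$, invoke \cref{lem: asos} with $R=2$ and $\SOSconedist{2}=4$ to get a $4\cdot 10^{-9}$-approximate SOS decomposition of $x$ in $B_2$, and then apply \cref{cor:asos-lambdaDelta} to absorb the $\varepsilon\Delta_3$ into the linear term. This matches the paper's reasoning step for step, including the arithmetic, so there is nothing to add.
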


\begin{prop}
\label{adj3}
The element $\Adj_3 \in \R\SL_3(\Z)$ satisfies
\[
\Adj_3 - 0.157999 \Delta_3 \in \Sigma^2_2 I\SL_3(\Z).
\]
\end{prop}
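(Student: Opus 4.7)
The plan is to mirror the certificate-based strategy of \cref{sl3}: find a numerical approximate sum of squares decomposition of $\Adj_3 - \lambda \Delta_3$ for some $\lambda$ slightly larger than $0.157999$, and then upgrade it to a rigorous one by means of \cref{lem: asos} and \cref{cor:asos-lambdaDelta}.

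First I would exploit symmetry to set up a tractable semidefinite program. The element $\Adj_3$ is $\ast$-invariant, $A_3$-invariant, and supported in $B_2(\SL_3(\Z),S_3)$, so it is natural to seek a decomposition $\Adj_3 - \lambda \Delta_3 = \sum_i \xi_i^*\xi_i$ with each $\xi_i$ supported in $B_1$. Writing such a decomposition as $\xi^* Q \xi$, where $\xi$ is a column of basis elements of $B_1$ and $Q$ is a positive semidefinite matrix indexed by $B_1 \times B_1$, the problem reduces to finding $Q \succeq 0$ satisfying one linear equation per group element in the support of $\Adj_3 - \lambda \Delta_3$ (which lies in $B_2$). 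Projecting $Q$ onto the $A_3$-invariant, $\ast$-invariant subspace cuts the number of free parameters down drastically, exactly as in \cite{KalubaNowak2018}.

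Second, I would run a semidefinite optimisation solver with $\lambda$ chosen slightly larger than $0.157999$, obtain an approximate matrix $Q_{\mathrm{num}}$, round it to a rational symmetric matrix $Q$ and repair positive semidefiniteness if necessary (for instance by adding a small multiple of the identity on a suitable invariant subspace). From $Q$ one then extracts explicit elements $\xi_i$ and computes the residual
\[
b = (\Adj_3 - \lambda \Delta_3) - \sum_i \xi_i^* \xi_i,
\]
which is automatically supported in $B_2$. Interval arithmetic provides a rigorous upper bound on $\|b\|_1$.

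Third, \cref{lem: asos} applied with $R=1$ converts the bound on $\|b\|_1$ into an $\varepsilon$-approximate sum of squares decomposition of $\Adj_3 - \lambda \Delta_3$ supported in $B_1$, with $\varepsilon \geqslant \SOSconedist{1}\|b\|_1$. \cref{cor:asos-lambdaDelta} then absorbs $\varepsilon \Delta_3$ into the $\lambda \Delta_3$ term, yielding $\Adj_3 - 0.157999\,\Delta_3 \in \Sigma_2^2 I\SL_3(\Z)$, provided the parameters have been arranged so that $\lambda - \varepsilon \geqslant 0.157999$.

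The main obstacle is computational rather than conceptual: the solver must return $Q_{\mathrm{num}}$ whose rounding has residual small enough that, after the constant from \cref{lem: asos}, the resulting $\varepsilon$ stays below $\lambda - 0.157999$. Since $\Adj_3$ has the same support and the same symmetry group as ${\Delta_3}^2$, for which \cite{KalubaNowak2018} already produced a certificate with residual of $\ell^1$-norm below $10^{-9}$, the same numerical pipeline is expected to yield a suitable $Q$ with comparable precision.
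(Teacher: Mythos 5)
Your strategy matches the paper's: obtain a numerical approximate sum-of-squares certificate via semidefinite optimisation, bound the $\ell^1$-norm of the residual rigorously, and upgrade using \cref{lem: asos} and \cref{cor:asos-lambdaDelta}. The one concrete mismatch is the radius you target for the certificate. You propose to seek $\xi_i$ supported in $B_1$, so the residual $b$ lands in $B_2$ and you invoke \cref{lem: asos} with $R=1$. The paper's computation (like the one in \cite{KalubaNowak2018} for $\Delta_3^2$) instead produces $\xi_i$ supported in $B_2$, with residual $b$ supported in $B_4$, and applies \cref{lem: asos} with $R=2$, paying the constant $\SOSconedist{2}=4$ rather than $\SOSconedist{1}=1$. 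There is no reason to expect a certificate with $\xi_i$ supported only in $B_1$ to exist: $\Adj_3$ is built from products $\Delta_e\Delta_f$ of non-commuting edge Laplacians, and the known Kazhdan radius for $(\SL_3(\Z),S_3)$ is $2$, not $1$. The proposition's conclusion lands in $\Sigma^2_2$, which is exactly compatible with $\xi_i \in B_2$, so you lose nothing by allowing the larger support; the argument then goes through verbatim with the factor $4$ and the paper's reported bound $\|b\|_1 < 2\cdot 10^{-8}$, giving $\varepsilon = 8\cdot 10^{-8} < 10^{-6} = 0.158 - 0.157999$.

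A second, smaller point: your heuristic ``$\Adj_3$ has the same support and symmetry as $\Delta_3^2$, so comparable precision is expected'' is plausible but not automatic; the two SDPs have the same feasible set structure but different objective/constraint data, and in practice the achievable spectral gap differs (here $\lambda\approx 0.158$ for $\Adj_3$ versus $\lambda\approx 0.28$ for $\Delta_3^2$). This does not affect the soundness of the proof, only the constant you can certify.
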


\begin{proof}
A computer calculation yielded  elements $\xi_i \in I \SL_3(\Z)$ supported on $B_2$ such that
\[\Adj_3 - 0.158\Delta_3 - b = \sum\xi_i^*\xi_i \in \Sigma^2_2 I\SL_3(\Z).\]
The remainder $b$ is supported in $B_4$, and is of magnitude
$\|b\|_1 < 2\cdot 10^{-8}$.
Thus, $\Adj_3 - 0.158\Delta_3$ admits an $\left(8\cdot 10^{-8}\right)$-approximate sum of squares decomposition supported on $B_2$
and therefore $\Adj_3 - 0.157999 \Delta_3 \in \Sigma^2_2 I\SL_3(\Z)$ by \cref{cor:asos-lambdaDelta}.
\end{proof}

\begin{thm}
For every $n\geqslant 3$, the group $\SL_n(\Z)$ has Kazhdan's property~(T).
Moreover, endowed with the standard generating set, its Kazhdan radius is at most $2$.
\end{thm}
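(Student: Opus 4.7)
The plan is to combine \cref{adj3} with Method I (\cref{method I}), taking the trivial choice $k = 0$, which is available because $\Op_3 = 0$ makes the constraint $k(n-3) \leqslant m-3$ vacuous.

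First, I would dispose of the base case $n = 3$ by citing \cref{sl3}, which already gives property~(T) for $\SL_3(\Z)$ with Kazhdan radius at most $2$. (Alternatively, the argument below also covers $n=3$ since the case $n=3$ is included in the statement of Method I.)

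For $n \geqslant 4$, I would apply \cref{method I} to $G_3 = \SL_3(\Z)$ with parameters $k = 0$, $R = 2$, and $\lambda = 0.157999 > 0$. The hypothesis of the proposition reads
\[
\Adj_3 + 0 \cdot \Op_3 - 0.157999\, \Delta_3 \in \Sigma^2_2 \R\SL_3(\Z),
\]
which is precisely the content of \cref{adj3}. With $k = 0$, the admissibility condition $k(n-3) \leqslant m-3$ in \cref{method I} becomes $0 \leqslant m - 3$, hence holds for all $m \geqslant 3$. Method I therefore yields property~(T) for $\SL_m(\Z)$ with Kazhdan radius at most $R = 2$ for every $m \geqslant 3$.

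The only genuine work is the computer-assisted input \cref{adj3}; once that is in hand, the rest is a direct invocation of Method I. There is no real obstacle at this stage, since the choice $k = 0$ sidesteps the constraint that normally couples $n$ and $m$ in Method I, and the $n = 3$ case is exactly the one where $\Op_n$ vanishes and allows such a choice. Thus the theorem follows immediately by combining \cref{adj3} with the $n=3$, $k=0$ case of \cref{method I}.
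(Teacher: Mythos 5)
Your proposal is exactly the argument given in the paper: apply Method~I with $n=3$, $k=0$, $R=2$, $\lambda = 0.157999$, using \cref{adj3} as the computer-assisted input. The paper's proof is a one-line invocation of \cref{Adj+kOp>=0} and \cref{adj3}, which is precisely what you do.
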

\begin{proof}
 This follows immediately by applying \cref{Adj+kOp>=0} (Method I, with $k=0$) and  \cref{adj3}.
\end{proof}

This constitutes a new proof of Kazhdan's property~(T) for $\SL_n(\Z)$ with $n \geqslant 3$.

\begin{rmk}(Kazhdan constants)\label{rmk: Explicit Kazhdan constants}
An explicit construction of vectors in representations of the groups $\SL_n(\Z)$ provided by A. \.{Z}uk (see \cite{Shalom1999}) established an upper bound on the Kazhdan constant.
A lower bound was obtained by M.~Kassabov \cite{Kassabov2005} via bounded generation property. To our knowledge these are the best known bounds on the Kazhdan constant for $\SL_n(\Z)$:
\[\frac{1}{42\sqrt{n}+860} \leqslant \kappa(\SL_n(Z), S_n) \leqslant \sqrt{\frac{2}{n}}.\]

By showing that $\Adj_3 - 0.157999 \Delta_3 \in \Sigma^2_2 I \SL_3(\Z)$ we obtain \[\sqrt{\frac{0.157999(n-2)}{n^2-n}} \leqslant \kappa(\SL_n(\Z), S_n)\]
for all $n\geqslant 3$.
\end{rmk}

\begin{rmk}
An even better estimate is obtained from the fact that
$\Adj_4 + \Op_4 - 0.82\Delta \in \Sigma^2_2 I SL_4(\Z).$
The details of this computation are provided in the jupyter notebook linked in \cref{replication}.
Using \cref{Adj+kOp>=0} leads to
\[\sqrt{\frac{0.41(n-2)}{n^2-n}} \leqslant \kappa(\SL_n(\Z), S_n)\]
for $n \geqslant 4$.

Similarly, a yet stronger estimate is implied by the existence of a sum of squares decomposition for
$\Adj_5 + 1.5\Op_5 - 1.5\Delta_5\in \Sigma^2_2 I SL_5(\Z)$. This is used to obtain (for $n\geqslant 6$)
\[\sqrt{\frac{0.5(n-2)}{n^2-n}} \leqslant \kappa(\SL_n(\Z), S_n).\]

\medskip
While asymptotically optimal, the previously known bounds differ in almost two orders of magnitude even for $n=1000$.
The lower bound obtained here is asymptotically a half of the upper bound and is already $\frac{2}{5}$-ths of the upper bound for $n = 6$.
\end{rmk}

\subsection{Property (T) for \texorpdfstring{$\Aut(F_n)$}{Aut(Fn)}}
\label{section (T) for AutFn}

Let $\Delta_5$ denote the Laplacian of $\SAut(F_5)$ with respect to the standard generating set.
It was shown in \cite{Kalubaetal2017} (see \cite{Kaluba2017a}) that
\[{\Delta_5}^2 - 1.3\, \Delta_5 - b \in \Sigma^2_2 I\SAut(F_5)\]
with remainder $b$ supported in $B_4$ and such that $\|b\|_1 < 3\cdot 10^{-6}$.
As in the case of $\SL_3(\Z)$ this yields the following proposition.

\begin{thm}[\cite{Kalubaetal2017}]
\label{saut5}
The group $\SAut(F_5)$ has Kazhdan's property~(T).
Moreover, endowed with the standard generating set, its Kazhdan radius is at most $2$.
\end{thm}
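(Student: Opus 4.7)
The plan is to mirror the proof of Proposition \ref{sl3} for $\SL_3(\Z)$, since the structure is identical. The numerical computation from \cite{Kalubaetal2017} (with the certificate data re-examined in \cite{Kaluba2017a}) produces explicit elements $\xi_i \in I\SAut(F_5)$ all supported in $B_2$, together with a small remainder $b$ supported in $B_4$ with $\|b\|_1 < 3\cdot 10^{-6}$, such that
\[
{\Delta_5}^2 - 1.3\, \Delta_5 - b = \sum_i \xi_i^* \xi_i.
\]
I would rewrite this as $b \leqslant_2 {\Delta_5}^2 - 1.3\,\Delta_5$ and feed it into the approximation machinery developed in the previous section.

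Concretely, \cref{lem: asos} then says that ${\Delta_5}^2 - 1.3\,\Delta_5$ admits an $\varepsilon$-approximate sum of squares decomposition supported in $B_2$ for every
\[
\varepsilon \geqslant \SOSconedist{2}\,\|b\|_1 = 4\cdot\|b\|_1 < 1.2\cdot 10^{-5}.
\]
Since $1.3$ dwarfs this bound, \cref{cor:asos-lambdaDelta} (applied with $\lambda = 1.3$ and any admissible $\varepsilon$) upgrades the approximate decomposition to a genuine one, yielding
\[
{\Delta_5}^2 - (1.3-\varepsilon)\,\Delta_5 \in \Sigma^2_2\, I\SAut(F_5)
\]
with $1.3-\varepsilon > 0$. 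By Ozawa's characterisation \cite{Ozawa2016}, this is exactly property~(T) for $\SAut(F_5)$, and the fact that the supporting elements all lie in $B_2$ shows that the Kazhdan radius with respect to the standard generating set is at most $2$.

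The only substantive content in the argument — producing the numerical decomposition and the norm bound on the residue $b$ — is not done here but imported wholesale from \cite{Kalubaetal2017}; the \emph{hard part} is therefore already settled, and what remains is the formal bookkeeping above, translating that numerical output into a statement in $\Sigma^2_2\, I\SAut(F_5)$ via \cref{lem: asos} and \cref{cor:asos-lambdaDelta}.
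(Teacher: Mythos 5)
Your argument is correct and is exactly the route the paper takes: the paper's own ``proof'' of \cref{saut5} is just the citation to \cite{Kalubaetal2017} for the numerical certificate $\Delta_5^2 - 1.3\Delta_5 - b \in \Sigma^2_2 I\SAut(F_5)$ with $\|b\|_1 < 3\cdot 10^{-6}$, followed by the remark ``as in the case of $\SL_3(\Z)$,'' and you have faithfully unwound that reference by passing through \cref{lem: asos} (with $R=2$, so $\varepsilon \geqslant 4\|b\|_1$) and \cref{cor:asos-lambdaDelta}. Your reading of the corollary as yielding $\Delta_5^2 - (1.3-\varepsilon)\Delta_5 \in \Sigma^2_2 I\SAut(F_5)$ is the one actually established in its proof and is what the paper itself uses implicitly (e.g.\ $\lambda_5 \geqslant 1.29999$ in the Kazhdan-constant remark).
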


\begin{prop}
\label{Adj5+2Op5}
The element $\Adj_5 + 2 \Op_5$ in $\R\SAut(F_5)$ satisfies
\[
\Adj_5 + 2 \Op_5 -  0.278 \Delta_5 \in \Sigma^2_2 I\SAut(F_5).
\]
\end{prop}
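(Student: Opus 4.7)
The plan follows the template established by \cref{adj3}, with the numerical part carried out for the larger group $\SAut(F_5)$. First, I would run a semidefinite programming computation to produce a finite collection of elements $\xi_i \in I\SAut(F_5)$ each supported in the ball $B_2$, together with a remainder $b$ supported in $B_4$ of small $\ell_1$\nobreakdash-\hspace{0pt}norm, satisfying
\[
 \Adj_5 + 2\Op_5 - 0.138\,\Delta_5 - b = \sum_i \xi_i^* \xi_i \in \Sigma^2_2 I\SAut(F_5).
\]
To keep the SDP of tractable size I would exploit the $A_5$\nobreakdash-\hspace{0pt}invariance together with $*$\nobreakdash-\hspace{0pt}invariance of $\Adj_5 + 2\Op_5 - \lambda \Delta_5$ in order to block\nobreakdash-\hspace{0pt}diagonalise the Gram matrix via a symmetry\nobreakdash-\hspace{0pt}adapted basis on $B_2 \subset \SAut(F_5)$; this is the same device exploited in \cite{Kalubaetal2017}. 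The numerical target for the coefficient of $\Delta_5$ would be slightly larger than $0.138$, with the gap chosen to absorb the approximation error arising from the residual $b$.

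Once the numerical certificate is in hand, converting it into an exact statement is purely formal. I would apply \cref{lem: asos} with $R=2$ (so that $\SOSconedist{2}=16$) to conclude that $\Adj_5 + 2\Op_5 - \lambda\Delta_5$ admits an $\varepsilon$\nobreakdash-\hspace{0pt}approximate sum of squares decomposition supported in $B_2$, for $\varepsilon$ proportional to $\|b\|_1$. Then \cref{cor:asos-lambdaDelta}, applied with the numerical coefficient of $\Delta_5$ playing the role of $\lambda$ and $0.138$ playing the role of $\lambda - \varepsilon$, gives the desired conclusion. The only arithmetic to check is that the margin between the solver's coefficient and $0.138$ comfortably exceeds $\SOSconedist{2}\|b\|_1 = 16 \|b\|_1$; a remainder of order $10^{-6}$, as in \cref{saut5}, leaves ample room.

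The main obstacle is not the logical scaffolding but the computational one. The ball $B_2$ in $\SAut(F_5)$ with respect to the Nielsen generating set is substantially larger than in either $\SL_3(\Z)$ or the case of \cref{saut5}, inflating both the ambient vector space of supports (on $B_4$) and the Gram matrix of $\xi_i$'s (on $B_2$). High numerical precision is also required, since the intended sum of squares certifies the strict inequality with a comparatively small coefficient ($0.138$) in front of $\Delta_5$, leaving little slack for the solver's roundoff. Without symmetry reduction the computation is infeasible, so the careful implementation of the $A_5$\nobreakdash-\hspace{0pt}symmetrised SDP — together with an exact rational rounding and verification step at the end — is where the actual work lies. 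The underlying ideas are those of \cite{KalubaNowak2018} and \cite{Kalubaetal2017}; what remains is to execute them with the element $\Adj_5 + 2\Op_5$ in place of $\Delta_5^2$.
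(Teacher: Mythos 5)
Your proposal mirrors the paper's proof precisely: the paper computes a certificate $36(\Adj_5 + 2\Op_5) - 5\Delta_5 - b \in \Sigma^2_2 I\SAut(F_5)$ with $\|b\|_1 < 0.00486$ and residual supported in $B_4$, then applies \cref{lem: asos} followed by \cref{cor:asos-lambdaDelta} exactly as you describe. One arithmetic slip worth noting: $\SOSconedist{2} = 4$, not $16$ (you may have computed $\SOSconedist{4}$); since this only makes your sufficient condition stricter it does not affect the argument, and the margin $5/36 - 0.138 \approx 9\times 10^{-4}$ does in fact absorb the approximation error coming from $\|b/36\|_1$.
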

\begin{proof}
Computer calculations for $y = \Adj_5 + 2 \Op_5 \in \R \SAut(F_5)$ found elements $\xi_i$ such that
\[y - 0.28 \Delta_5 - b = \sum \xi_i^* \xi_i \in \Sigma^2_2 I\SAut(F_5)\]
(for replication details see \cref{replication}).
The remainder $b$ is supported in $B_4$ and $\Vert b \Vert_1 < 0.0009423$.
Thus $ y - 0.28\Delta_5$ admits a $\left(0.00189\right)$-approximate sum of squares decomposition supported in $B_2$,
and so by \cref{cor:asos-lambdaDelta} $\Adj_5 + 2 \Op_5 -  0.278 \Delta_5 \in \Sigma^2_2 I\SAut(F_5)$ .
\end{proof}

\begin{rmk}
\label{Adj5+3Op5}
Additional computations (see \cref{replication}) show that
\[
\Adj_5 + 3 \Op_5 -  1.37 \Delta_5 \in \Sigma^2_2 I\SAut(F_5).
\]
\end{rmk}

\begin{thm}
\label{main thm long}
 For every $n\geqslant 6$, the groups $\SAut(F_n)$, $\Aut(F_n)$ and $\mathrm{Out}(F_n)$ have Kazhdan's property~(T).
 Moreover, endowed with the standard generating set, the Kazhdan radius of $\SAut(F_n)$ is at most $2$.
\end{thm}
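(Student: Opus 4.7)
The plan is to establish property~(T) for $\SAut(F_n)$ in two regimes, $n=6$ and $n\geqslant 7$, and then to transfer the conclusion to $\Aut(F_n)$ and $\mathrm{Out}(F_n)$ by elementary inheritance.

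For $n=6$ I would apply \cref{Adj+kOp>=0 Xn} (Method II) with parameters $n=6$, $R=2$, $k=2$. Its first hypothesis, $\Delta_5^2 - \lambda\Delta_5 \in \Sigma^2_2 I\SAut(F_5)$ with $\lambda = 1.3$, is supplied by \cref{saut5}; its second hypothesis, $\Adj_5 + 2\Op_5 - \mu\Delta_5 \in \Sigma^2_2 I\SAut(F_5)$ with $\mu = 0.138$, is supplied by \cref{Adj5+2Op5}. The admissibility condition $n \geqslant k+3$ reads $6 \geqslant 5$, which holds, so Method II delivers property~(T) for $\SAut(F_6)$ with Kazhdan radius at most $2$.

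For the remaining $n \geqslant 7$, I would invoke \cref{Adj+kOp>=0} (Method I) with $n=5$, $k=2$, $\lambda = 0.138$ and $R=2$, again based on \cref{Adj5+2Op5}. Method I's conclusion applies to every $m \geqslant 5$ satisfying $k(n-3) = 4 \leqslant m-3$, i.e.\ to every $m \geqslant 7$, and delivers property~(T) with Kazhdan radius at most $2$ in each case. Together with the previous paragraph this covers all $n \geqslant 6$. Note that $\Adj_5 + 3\Op_5 - 1.316 \Delta_5 \in \Sigma^2_2 I\SAut(F_5)$ from \cref{Adj5+3Op5} would only yield $m \geqslant 2k+3 = 9$ via Method I, so the $k=2$ certificate is the one actually needed for the tail; the $k=3$ refinement is relevant only for sharper Kazhdan constants.

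Finally, since $\SAut(F_n)$ has index $2$ in $\Aut(F_n)$, property~(T) passes to finite-index overgroups, so $\Aut(F_n)$ has~(T); and $\mathrm{Out}(F_n)$, being a quotient of $\Aut(F_n)$, inherits~(T) as well. Thus the formal deduction above is pure bookkeeping once the two methods and the two sum-of-squares certificates are in hand. The real content — and the main obstacle — lies in the numerical input, namely \cref{Adj5+2Op5} (and to a lesser extent \cref{saut5} imported from prior work), whose verification rests on semidefinite programming combined with the approximation-by-$\Delta$ machinery of \cref{lem: asos} and \cref{cor:asos-lambdaDelta}.
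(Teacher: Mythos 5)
Your argument is correct and matches the paper's proof almost exactly: Method I (\cref{Adj+kOp>=0}) with the certificate $\Adj_5+2\Op_5-0.138\Delta_5$ from \cref{Adj5+2Op5} handles $n\geqslant 7$, Method II (\cref{Adj+kOp>=0 Xn}) handles $n=6$, and finite-index/quotient inheritance gives $\Aut(F_n)$ and $\mathrm{Out}(F_n)$. The only deviation is that the paper instantiates Method II for $n=6$ with $k=3$ (using the $\Adj_5+3\Op_5$ certificate from \cref{Adj5+3Op5}) rather than your $k=2$; both satisfy the constraint $n\geqslant k+3$ and yield the stated conclusion, the paper's choice just feeding a larger $\mu$ into \cref{Adj+kOp>=0 Xn Kaz} for a sharper Kazhdan-constant bound.
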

\begin{proof}
For $\SAut(F_n)$, the result follows immediately from \cref{Adj+kOp>=0} (Method I) together with \cref{Adj5+2Op5} for $n\geqslant 7$ (setting $k=2$), and from \cref{Adj+kOp>=0 Xn} (Method  II) together with \cref{saut5} for $n=6$ (setting $k=3$).

For $\Aut(F_n)$ and $\operatorname{Out}(F_n)$ the result follows from the fact that property~(T) ascends to finite index overgroups and descends to quotients.
\end{proof}

\hyphenation{met-abel-ian}
\begin{rmk}[Suggested by M. Mimura]
Let $\Phi_n$ denote the free metabelian group of rank $n$.
It follows from \cref{main thm long} that $\Aut(\Phi_n)$ has property (T) for $n\ge 5$,
since $\Aut(\Phi_n)$ is a quotient of $\Aut(F_n)$ by a result of Bachmuth and Mochizuki~\cite{BachmuthMochizuki1985} (which holds also for $n=4$).
It was also shown by the same authors in~\cite{BachmuthMochizuki1982} that $\Aut(\Phi_3)$ is not the image of $\Aut(F_3)$ -- in fact, they showed that $\Aut(\Phi_3)$ is not finitely generated.
\end{rmk}

\begin{rmk}(Kazhdan constants)
These results lead to the following lower bounds of the Kazhdan constant $\kappa_n = \kappa(\SAut(F_n), S_n)$. We have $\lambda_5 \geqslant 1.29999$ and $\mu \geqslant 0.277$, and hence
\begin{align*}
0.18027
  \leqslant \sqrt{\frac{2\lambda_5}{4(5^2-5)}} &
    \leqslant \kappa_5,\quad \text{by \cite{Kalubaetal2017};}\\
0.00983
  \leqslant \sqrt{\frac{3\lambda_5 + \mu}{6h_6(6^2-6)}} &
    \leqslant \kappa_6,\quad \text{by \cref{Adj+kOp>=0 Xn Kaz};}\\
0.07426
  \leqslant \sqrt{\frac{0.278\cdot (7-2)}{6\cdot (7^2-7)}} &
    \leqslant \kappa_7,\\
0.07045
  \leqslant \sqrt{\frac{0.278\cdot(8-2)}{6\cdot (8^2-8)}} &
    \leqslant \kappa_8,\quad \text{by \cref{Adj+kOp>=0,Adj5+2Op5};}\\
0.14899
  \leqslant \sqrt{\frac{1.37\cdot(9-2)}{6\cdot (9^2-9)}} &
    \leqslant \kappa_9,\\
\intertext{and in general, for $n\geqslant 9$, by \cref{Adj+kOp>=0,Adj5+3Op5} }
  \sqrt{\frac{1.37(n-2)}{6(n^2-n)}} &\leqslant \kappa_n.
\end{align*}

One could potentially improve those bounds for $n=6$ by obtaining a sum of squares decompositions for $\Adj_5 + 1.5 \Op_5 - \lambda\Delta_5$ and using \cref{Adj+kOp>=0} (Method I), however the authors were not able to do so.

\end{rmk}

\subsubsection*{Product Replacement Algorithm}

Product replacement algorithm is an algorithm which utilizes a random walk to generate \emph{pseudorandom} elements in finite groups.
In the classical setting of \cite{Celleretal1995}, given a (huge, but finite) group $G$ generated by $n$ elements one considers a random walk on $4n(n-1)$-regular graph $\Gamma$:
\begin{itemize}
\item the vertices correspond to $n$-generating tuples of $G$ and
\item an edge connects two such tuples if one can be obtained by applying one of the standard generators of $\SAut(F_n)$ to the other.
\end{itemize}
Using the notation of \cite{LubotzkyPak2001}*{Theorem 3.1} (and noting that the action of $\Z_2 \wr \operatorname{Sym}_n$ is transitive on the generating set $S_n$) the provable convergence rate of the Product Replacement random walk  can be estimated as
\[\left\|Q_{(g_1, \ldots, g_n)}^t - U \right\|_{\operatorname{tv}} \leqslant \varepsilon \quad \text{for}\quad t\geqslant \frac{24(n^2-n)}{1.37(n-2)}\log\left(\frac{|\Gamma|}{\varepsilon}\right),\]
where $\log(|\Gamma|) \leqslant n\log|G|$. Thus, for a given group $G$ with an $n$-generating tuple the mixing time is logarithmic in $|G|$ and quadratic in $n$.

Product Replacement Algorithm is implemented e.g. in GAP \cite{GAP}, however,
the applicability of the obtained bounds to actual implementations is debatable:
the GAP system implements a version of PRA with an \emph{accumulator} and it is not clear whether the results of \cite{LubotzkyPak2001} apply to it.
The implementations of other computer algebra systems are not available to be analyzed, so no conclusion about them can be drawn.

\appendix
\section{Replication of the results}
\label{replication}

The algorithm computing approximate sum of squares decompositions has not changed since its description in \cite{Kalubaetal2017},
however the architecture of the code has had, to allow for defining semi-definite programs for sum of squares decompositions for arbitrary group ring elements and order units.
The source code is written in the Julia programming language \cite{Julia2017}. It uses the \verb`SCS` solver \cite{O'Donoghue2016} through the \verb`JuMP` modeling language \cite{JuMP2017}, as well as \verb`AbstractAlgebra` package \cite{Nemo2017} as dependencies.

For an example on how to run and experiment with the code we refer the reader to the provided \noindent\href{https://nbviewer.jupyter.org/gist/kalmarek/03510181bc1e7c98615e86e1ec580b2a}{jupyter notebook}.
The dataset and the detailed replication instructions are deposited in the Zenodo research data repository \cite{Kaluba2018}. This repository contains both the jupyter notebook linked above, dealing with the case of special linear groups, as well as a second jupyter notebook used for the case of automorphism groups of free groups.

\paragraph{Replication}
The first jupyter notebook can be used to perform
the described computations for $\SL_n(\Z)$ ($n=3,4,5$).
Carrying out the calculations should take no longer than $10$ minutes on a regular laptop computer.

As an interesting detail we note that the optimization problem for $\Adj_4 +\Op_4$ is much better conditioned than the one for $\Adj_3$
e.g. exhibits faster convergence and better accuracy with relatively larger $\lambda$,
despite being larger in size (i.e. number of variables and constraints).
The same is even more true for $\Adj_5 + 1.5\Op_5$.

\medskip
We provide a precomputed solution for the more computationally demanding case of $\SAut(F_5)$. Using this solution, the second jupyter notebook provides verification tools for the existence of approximate sum of squares decompositions of the elements
\begin{align*}
\Adj_5 + 2\Op_5 - 0.28\Delta \in I\SAut(F_5), &\text{ and}\\
\Adj_5 + 3\Op_5 - 1.4\Delta \in I\SAut(F_5).
\end{align*}
The precomupted data includes the elements
$\Delta_5 , \Adj_5 , \Op_5 $
as well as the orbit decomposition, multiplication table and matrices $Q$, which provide the approximate sum of squares decomposition.

The second jupyter notebook also contains  code which may be used to find an approximate sum of squares decomposition of
\[
\Adj_4 + 100 \Op_4 - 0.1\Delta \in I\SAut(F_4).
\]
The reason for including this computation is that it performs all the steps without any precomputed data. The running time should be less than 10 minutes. Once the existence of a sum of squares is certified, \cref{method I} with $k=100$ proves that $\SAut(F_n)$ has property~(T) for every $n\geqslant 103$.

\bibliography{FAb}

\end{document}